\newcounter{Scounter}
\newtheorem{thm}{Theorem}[section]
\newtheorem{cor}[thm]{Corollary}
\newtheorem{prop}[thm]{Proposition}
\newtheorem{definition}[thm]{Definition}
\newtheorem{lemma}[thm]{Lemma}
\newtheorem{problem}[thm]{Problem}
\newtheorem{rem}[thm]{Remark}
\newtheorem{fact}[thm]{Fact}
\newtheorem{con}[thm]{Conjecture}
\newcommand{\qed}{{$\quad\square$\vs{3.6}}}
\newcommand{\vs}[1]{\vspace*{#1 mm}}
\def\thanks#1{%
   \footnotemark
   \edef\@tempa{\noexpand\noexpand\noexpand\footnotetext[\the\c@footnote]}%
   \toks@\expandafter{\@thanks}%
   \toks\tw@{{#1}}
   \xdef\@thanks{\the\toks@\@tempa\the\toks\tw@}}
\begin{document}

\title{Cycle double covers and non-separating cycles }

\author{
Arthur Hoffmann-Ostenhof, Cun-Quan Zhang, Zhang Zhang}

\date{}
\maketitle

\begin{abstract}
Which $2$-regular subgraph $R$ of a cubic graph $G$ can be extended to a cycle double cover of $G$? We provide
a condition which ensures that every $R$ satisfying this condition is part of a cycle double cover of $G$. As one consequence, we prove that every $2$-connected cubic graph which has a decomposition into a spanning tree and a $2$-regular subgraph $C$ consisting of $k$ circuits with $k\leq 3$, has a cycle double cover containing $C$.
\end{abstract}

\noindent
{\bf Keywords:}
cycle double cover, non-separating cycle, snark, spanning tree, hist.

\section{Introduction and definitions}

All graphs in this paper are assumed to be finite. A \textit{trivial} component is a component consisting of one single vertex. In context of cycle double covers the following definitions are convenient.
A \textit{circuit} is a $2$-regular connected graph and a \textit{cycle} is a graph such
that every vertex has even degree. Thus every $2$-regular subgraph of a cubic graph is a cycle.\\
In this paper the following concept is essential: a subgraph $C$ of a connected graph $H$ is called \textit{non-separating} if $H-E(C)$ is connected, and \textit{separating} if $H-E(C)$ is disconnected.
Hence, every non-separating cycle $C$ in a connected cubic graph $H$ with $|V(H)|>2$ is an induced subgraph of $H$
if $C$ does not have a trivial component. \\
A \textit{cycle double cover (CDC)} of a graph $G$ is a set $S$ of cycles such that every edge of $G$ is contained in the edge sets of precisely two elements of $S$. The well known \textit{Cycle Double Cover Conjecture (CDCC)} ({\rm \cite{Tutte1987}, \cite{Szekeres1973}, \cite{Itai1978}, \cite{Seymour1979}; or see \cite{Z2})} states that every bridgeless graph has a CDC. 
It is known that the CDCC can be reduced to snarks, i.e cyclically $4$-edge connected cubic
graphs of girth at least $5$ admitting no $3$-edge coloring, see for instance \cite{Z2}. There are several versions of the CDCC, see \cite{Z2}. The subsequent one by Seymour is called the \textit{Strong-CDCC} {\rm (see \cite{Fleischner1984}, \cite{Fleischner1986}, or, see Conjecture 1.5.1 in \cite{Z2})} and it is one of the most active approaches to the CDCC.

\begin{con}
\label{CONJ: strong CDC}
Let $G$ be a bridgeless graph and let $C$ be a circuit of $G$. Then $G$ has a CDC ${\cal S}$ with $C \in {\cal S}$.
\end{con}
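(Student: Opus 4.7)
The Strong-CDCC is a long-standing open problem, so what follows is a proposed strategy rather than a plan I expect to carry to completion. The natural first step is to reduce to snarks: by suppressing $2$-valent vertices and contracting across cyclic edge cuts of size $\leq 3$, one may assume $G$ is cubic, cyclically $4$-edge-connected, of girth $\geq 5$, and not $3$-edge-colourable, while tracking the prescribed circuit $C$ through each reduction. In a $3$-edge-colourable cubic graph the three colour classes give three complementary $2$-factors forming a CDC, and a symmetric-difference adjustment handles the requirement that $C$ lie in the CDC whenever $C$ is even; so the content is entirely on the snark side.

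Assuming $G$ is a snark, I would look for a $2$-regular subgraph $R \supseteq E(C)$ that is \emph{non-separating} in the sense just introduced (so $G-E(R)$ is connected), and then apply the extension theorem promised in the abstract to produce a CDC containing $R$, and hence $C$. To build $R$, I would first extend $C$ to a $2$-factor of $G$ via a Tutte-style perfect matching argument on $G-V(C)$, and then modify this $2$-factor by symmetric differences with short circuits of its complement in order to merge the components of $G-E(R)$ one at a time, without disturbing the prescribed circuit $C$.

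The central obstacle---and the reason this conjecture has remained open---is exactly this merging step: there is no general mechanism for turning an arbitrary $2$-factor into a non-separating one while preserving a designated circuit, and in a snark the short circuits one needs to use for symmetric differences may simply fail to exist or may always disturb $C$. I would therefore expect any full proof to proceed by induction on a parameter measuring how far $E(C)$ is from spanning $E(G)$, starting with the case where $G-E(C)$ is a spanning tree (so $C$ is automatically non-separating) and $C$ has few components---the situation the authors apparently resolve via their extension theorem---and then climbing to cotrees of higher cyclomatic number. The genuine difficulty is precisely this climb; passing from bounded to unbounded cotree complexity seems to require an idea outside the local-modification toolkit sketched above.
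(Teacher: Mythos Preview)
The statement is Conjecture~\ref{CONJ: strong CDC}, the Strong-CDCC; the paper states it as an open conjecture and gives no proof, so there is nothing in the paper to compare your argument against. You rightly recognise this and offer only a strategy sketch.

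The decisive gap in that sketch is your appeal to ``the extension theorem promised in the abstract.'' The paper's actual extension result, Theorem~\ref{t:1}, does \emph{not} assert that every non-separating $2$-regular $R$ lies in a CDC: it requires in addition that $G/E(R)$ admit a nowhere-zero $4$-flow. The bare statement for an arbitrary non-separating cycle is precisely Conjecture~\ref{con:1} of the paper, which is itself open (and, via Proposition~\ref{p:reformulation}, equivalent to Conjecture~\ref{con:3}). Thus your plan reduces the Strong-CDCC to two open problems stacked on top of each other: (i)~produce a non-separating $2$-factor $R\supseteq C$ in an arbitrary snark---already problematic, since a $2$-factor through $C$ need not exist (there is no reason $G-V(C)$ carries a perfect matching), and the ``merge components by symmetric differences'' step faces exactly the obstructions you yourself name; and then (ii)~invoke Conjecture~\ref{con:1} for that $R$. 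If instead you try to route $R$ through Theorem~\ref{t:1}, you must additionally arrange that $G/E(R)$ has a nowhere-zero $4$-flow; this is Conjecture~\ref{con:4} with the extra constraint $C\subseteq R$, and Section~\ref{open} explicitly records that even the unconstrained version is open for snarks and fails without cyclic $4$-edge-connectivity. In short, the strategy does not reduce the difficulty; it relocates it to conjectures the paper leaves open.
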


Note that the Strong-CDCC can not be modified by replacing ``circuit'' with ``cycle'' since there are
infinitely many snarks which would serve as counterexamples, see \cite{BGHM,HH}. For instance, the Petersen graph $P_{10}$ has a $2$-factor, $C_2$ say, but $P_{10}$ does not have a CDC ${\cal S}$ such that $C_2 \in {\cal S}$. We underline that $C_2$ is separating! Here we only consider CDCs of graphs containing prescribed non-separating cycles.
In particular the following conjecture by the first author has been a motivation for this paper.

\begin{con}\label{con:1}\cite{Open}
Let $C$ be a non-separating cycle of a $2$-edge connected cubic graph $G$. Then $G$ has a CDC $\cal S$ with $C \in \cal S$.
\end{con}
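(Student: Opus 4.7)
The conjecture strengthens Conjecture~\ref{CONJ: strong CDC} by replacing ``circuit'' with ``non-separating cycle'', and the non-separating condition is essential: the Petersen $2$-factor already shows that arbitrary cycles cannot always be extended. My plan is by contraction and lifting. Set $H := G - E(C)$, which is connected by hypothesis, and let $C_1,\dots,C_k$ be the circuits of $C$. Contract each $C_i$ to a single vertex $v_i$; the resulting multigraph $\tilde G$ is connected and $2$-edge-connected, the vertex $v_i$ has degree $|V(C_i)|$ (necessarily even), and every other vertex is cubic. The goal is to find a CDC $\tilde{\mathcal{S}}$ of $\tilde G$ that can be lifted to a CDC of $G$ containing $C$.

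Any cycle $\tilde D \in \tilde{\mathcal{S}}$ visits each $v_i$ with an even number of edges, so the edges of $\tilde D$ at $v_i$ admit a pairing; each pair picks out two vertices of $C_i$ and can be viewed as a chord of the circuit $C_i$. Replacing each chord by one of the two arcs of $C_i$ joining its endpoints turns $\tilde D$ into an even subgraph $D$ of $G$. If, at every $v_i$, the arcs chosen across all cycles $\tilde D$ passing through $v_i$ partition $E(C_i)$, then $\{D : \tilde D \in \tilde{\mathcal{S}}\} \cup \{C\}$ is the desired CDC: each edge of $H$ is covered twice, and each edge of $C_i$ is covered once by the lifts and once by $C$ itself.

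The heart of the proof is this \emph{routing problem}: given the chord diagram on the cyclic vertex set of $C_i$ induced by all the pairings, choose one arc per chord so that the arcs tile $E(C_i)$. I would begin with the case $k = 1$, where one has full freedom in selecting $\tilde{\mathcal{S}}$, and try to arrange the pairings at $v_1$ to form a non-crossing chord diagram; nested non-crossing chords admit a tiling by a straightforward innermost-chord induction. This would handle a single non-separating circuit and set up an induction on $|E(C)|$ for the general case. The main obstacles I anticipate are: (i) producing $\tilde{\mathcal{S}}$ unconditionally, since its existence is itself an instance of the CDCC, forcing any induction to decrease to a base case that is known unconditionally (e.g.\ planar or $3$-edge-colorable graphs); and (ii) for $k \ge 2$, the chord diagrams at distinct contracted vertices $v_i$ are coupled through the shared cycles of $\tilde{\mathcal{S}}$, so the routing becomes a joint feasibility problem that need not decouple. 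Overcoming this coupling appears to be exactly what the paper's sufficient condition on the $2$-regular subgraph is designed to handle, and I would expect any unconditional proof of the conjecture to refine that condition further.
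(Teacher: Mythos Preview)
The statement is Conjecture~\ref{con:1}, which the paper does \emph{not} prove: it is presented as an open problem, and the paper's contributions toward it are the partial results Theorem~\ref{t:1} (assuming additionally that $G/E(C)$ has a nowhere-zero $4$-flow) and Theorem~\ref{t:tree} (when $C$ has at most three components). There is therefore no proof in the paper to compare your attempt against; a complete argument would resolve a known open conjecture.

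Your plan has the gap you yourself flag: obtaining a CDC $\tilde{\mathcal{S}}$ of $\tilde G=G/E(C)$ is an instance of the CDCC, so the argument is circular without a reduction to a class where the CDCC is already known, and you provide no such reduction. A minor slip: $|V(C_i)|$ need not be even (a non-separating triangle in $K_4$ already yields a contracted vertex of degree~$3$); what is even is the degree of $v_i$ in each individual cycle $\tilde D$, which is all you actually use. More seriously, the routing step is far more rigid than your non-crossing heuristic suggests. Since $G$ is cubic and $C$ is induced, the edge at $v_i$ coming from $u\in V(C_i)$ is unique; each such edge lies in exactly two cycles of $\tilde{\mathcal{S}}$ and hence in exactly two pairs, so the chord diagram at $v_i$ has exactly $m_i:=|V(C_i)|$ chords. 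You ask their arcs to partition the $m_i$ edges of $C_i$, but every arc has length at least~$1$, so every arc must have length exactly~$1$, i.e.\ every chord must join \emph{adjacent} vertices of $C_i$. That forces the transition system of $\tilde{\mathcal{S}}$ at each $v_i$ to be the cyclic one inherited from $C_i$, a condition your sketch gives no mechanism to enforce. By contrast, the paper's Theorem~\ref{t:1} avoids both the circularity and the routing problem by assuming only a nowhere-zero $4$-flow on $G/E(C)$, lifting it to a $4$-flow on $G$ whose zero-set lies inside $E(C)$, and then applying Lemma~\ref{l:1} directly; the example $Q^*$ in Section~\ref{open} shows that this $4$-flow hypothesis is a genuine restriction.
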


Recall that a \textit{decomposition} of a graph $G$ is a set of edge-disjoint subgraphs covering $E(G)$.
Hence, if a connected cubic graph $G$ has a decomposition into a tree $T$ and a cycle $C$, then $C$ is a non-separating cycle of $G$. Note that all snarks with less than $38$ vertices have a decomposition into a tree and a cycle and that there are infinitely many snarks with such a decomposition, see \cite{HO1}. We consider the following reformulation (see Proposition \ref{p:reformulation}) of the above conjecture.

\begin{con}\label{con:3}
Let $G$ be a $2$-edge connected cubic graph which has a decomposition into a tree $T$ and a cycle $C$.
Then $G$ has a CDC ${\cal S}$ with $C \in {\cal S}$.
\end{con}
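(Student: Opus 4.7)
The plan is to reformulate the CDC extension as a cycle decomposition problem on an auxiliary multigraph, then attempt induction on the number of circuits of $C$. As a preliminary structural observation, since $G$ is cubic and $G=T\cup C$ with $C$ a cycle, every vertex satisfies $d_T(v)+d_C(v)=3$ with $d_C(v)$ even, forcing $d_T(v)\in\{1,3\}$. Hence $T$ is a homeomorphically irreducible spanning tree (HIST) whose leaves are exactly $V(C)$, while the internal (degree-$3$) vertices of $T$ lie outside $V(C)$. Finding a CDC $\mathcal{S}$ of $G$ containing $C$ is equivalent to finding a family $\mathcal{S}\setminus\{C\}$ of cycles in $G$ covering each $T$-edge exactly twice and each $C$-edge exactly once.

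Let $G^{*}$ be the multigraph obtained from $G$ by doubling every edge of $T$. A valid family $\mathcal{S}\setminus\{C\}$ corresponds to a decomposition of $E(G^{*})$ into cycles, each of which projects to a simple cycle of $G$ (that is, uses each $G$-edge at most once). Observe that $G^{*}$ has degree $4$ at each leaf of $T$, degree $6$ at each internal vertex of $T$, and contains the spanning tree $T$, so $G^{*}$ is connected with all degrees even, hence Eulerian. Therefore cycle decompositions of $G^{*}$ always exist; the non-trivial task is to find one in which no cycle uses both parallel copies of any single tree edge.

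The main plan is induction on $k$, the number of circuits of $C$. For the base case $k=1$, the cycle $C$ is a single circuit with $|V(C)|=n/2+1$ and $T$ is a cubic tree attached to it along leaves; here I would construct a second cycle from $T$-paths joining consecutive attachment points on $C$, arguing directly that together these cover every tree edge twice. For the inductive step $k\geq 2$, I would choose two circuits $C_i,C_j$ linked by a shortest $T$-path $P$, and either merge $C_i,C_j$ through $P$ into a single larger circuit or peel off a cycle exhausting the doubled edges of $P$; each such move replaces $C$ by a surrogate cycle with one fewer component, while recording the cycles produced along the way as further members of $\mathcal{S}$.

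The main obstacle is the local-to-global consistency of pairings at the degree-$6$ internal tree vertices of $G^{*}$. At each such vertex, the six incident half-edges must be split into three pairs, none of which couples the two copies of the same tree edge, and these local choices must combine so that no resulting cycle accidentally closes by coupling parallel copies at some distant vertex. Guaranteeing this consistency for arbitrary $k$ appears to require either a clever global invariant or a reduction to a parity/matroid-type problem on cubic graphs, and this is presumably the reason the paper handles only $k\leq 3$ via direct case analysis while the full conjecture remains open.
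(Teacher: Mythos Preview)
This statement is a conjecture, not a theorem: the paper does not prove it in general, and your closing paragraph correctly acknowledges this. The paper establishes only the case $k\le 3$ (Theorem~\ref{t:tree}), so there is no proof of the full statement to compare against. What can be compared is your sketch for small $k$ versus the paper's actual argument for $k\le 3$, and these are entirely different.

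Your reformulation via the doubled multigraph $G^{*}$ is correct, but neither step of the proposed induction is a proof. The base case $k=1$ is asserted, not argued: the $T$-paths between consecutive leaves on $C$ overlap on interior tree edges in an uncontrolled way, and you give no reason a ``second cycle'' built from them covers every tree edge exactly twice. The inductive step fails structurally: merging $C_i$ and $C_j$ along a $T$-path $P$ does not yield a new tree--plus--cycle decomposition of a cubic graph (removing $E(P)$ from $T$ leaves a forest, and any closed walk through $C_i\cup P\cup C_j$ that visits both circuits must traverse $P$ twice, hence is not a $2$-regular subgraph of $G$), so the induction hypothesis cannot be invoked. The ``peel off'' alternative is exactly the forbidden move, since a cycle of $G^{*}$ using both parallel copies of some edge of $P$ does not project to a cycle of $G$.

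The paper's route to $k\le 3$ is not ``direct case analysis'' but flow theory. It contracts $C$ and shows (Proposition~\ref{p:1,2,3}) that $G/E(C)$ is $4$-girth-degenerate, or $(4,P_{10})$-girth-degenerate when $k=3$; via Catlin's lemma (Lemma~\ref{LE: Catlin}, Lemma~\ref{l:4deg}) this gives a nowhere-zero $4$-flow on $G/E(C)$. Theorem~\ref{t:1} then uses the Goddyn--Zhang $4$-CDC lemma (Lemma~\ref{l:1}) to produce a $5$-CDC of $G$ containing $C$, with a separate patching argument (Lemma~\ref{l:girth-cdc}) for the Petersen-degenerate case. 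None of this proceeds by local surgery on the circuits of $C$.
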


Our main result, Theorem \ref{t:tree}, shows that Conjecture \ref{con:3} is true if the cycle $C$ has at most three components. Note that Theorem \ref{t:tree} is valid for all $2$-edge connected graphs. The proof is based on Theorem \ref{t:1} and results which imply the existence of nowhere-zero $4$-flows. Graphs constructed from the Petersen graph demand special treatment in the proof, see Theorem \ref{t:tree} (2). In Section \ref{sep} we consider applications of Theorem \ref{t:1} for cubic graphs, and in Section \ref{open} we present some remarks and one more conjecture.

Note that the tree $T$ in Conjecture \ref{con:3} is a \textit{hist} (see \cite{ABHT}), that is a spanning tree without a vertex of degree two (hist is an abbreviation for homeomorphically irreducible spanning tree). Conversely, every cubic graph with a hist has trivially a decomposition into a tree and a cycle. For informations and examples of snarks with hists, see \cite{HO1,HO1a}. Let us also mention that Conjecture \ref{con:3} limited to snarks is stated in \cite{HO1}.

\begin{prop} \label{p:reformulation}
Conjecture \ref{con:1} and Conjecture \ref{con:3} are equivalent.
\end{prop}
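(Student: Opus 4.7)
The plan is to handle the two directions separately. The implication Conjecture~\ref{con:1}$\Rightarrow$Conjecture~\ref{con:3} is immediate: if $G=T\cup C$ decomposes into a tree and a cycle, then $G-E(C)=T$ is connected, so $C$ is a non-separating cycle of $G$ and Conjecture~\ref{con:1} applies directly.

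For the converse, given a $2$-edge-connected cubic graph $G$ with a non-separating cycle $C$, I would modify $G$ to a graph $G'$ satisfying the hypothesis of Conjecture~\ref{con:3}, together with a pullback map on CDCs. Set $H:=G-E(C)$, which is connected, and fix any spanning tree $T$ of $H$; since each vertex of $C$ has degree $1$ in $H$, the cotree $F:=E(H)\setminus E(T)$ consists entirely of edges both of whose endpoints lie outside $V(C)$. If $F=\emptyset$ then $(G,T,C)$ already realizes the hypothesis of Conjecture~\ref{con:3}. Otherwise, replace each $e=u_ev_e\in F$ by a local gadget on four new vertices $a_e,b_e,c_e,d_e$ with seven new edges $u_ea_e,\,a_ec_e,\,a_ed_e,\,b_ec_e,\,b_ed_e,\,c_ed_e,\,b_ev_e$: a copy of $K_4$ on $\{a_e,b_e,c_e,d_e\}$ minus the edge $a_eb_e$, attached to $u_e$ and $v_e$ by two pendant edges. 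Routine checks show that the resulting graph $G'$ is cubic and $2$-edge-connected.

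Setting
\[
T'=T\,\cup\bigcup_{e\in F}\{u_ea_e,\,a_ec_e,\,a_ed_e,\,b_ev_e\},\quad C'=C\,\cup\bigcup_{e\in F}\{b_ec_e,\,b_ed_e,\,c_ed_e\},
\]
one verifies that $E(G')=E(T')\sqcup E(C')$, that $T'$ is a spanning tree (each gadget contributes two subtrees rooted at $u_e$ and $v_e$, which are already joined by $T$), and that $C'$ is a cycle (a disjoint union of $C$ with $|F|$ triangles). Conjecture~\ref{con:3} applied to $(G',T',C')$ yields a CDC $\mathcal{S}'$ of $G'$ with $C'\in\mathcal{S}'$.

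The final step pulls $\mathcal{S}'$ back to $G$ by contracting each gadget to its original edge. For $D\in\mathcal{S}'$ define $D^{G}:=(D\cap E(G))\cup\{e\in F:\{u_ea_e,b_ev_e\}\subseteq D\}$. Since $\{u_ea_e,b_ev_e\}$ is a $2$-edge-cut of $G'$, every $D$ meets it in $0$ or $2$ edges; this keeps the degree parities at $u_e$ and $v_e$ unchanged, so each $D^{G}$ is an even subgraph of $G$. Because $C'$ contains neither $u_ea_e$ nor $b_ev_e$, we get $(C')^{G}=C$. Covers of edges in $E(G)\setminus F$ transfer directly; for $e\in F$, the tree-edge $u_ea_e\in T'$ is covered by exactly two cycles of $\mathcal{S}'\setminus\{C'\}$, and by the $2$-edge-cut these are precisely the two cycles that acquire $e$ under the pullback, giving the required double cover. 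The main technical obstacle lies in the gadget design: one needs a replacement that is simultaneously cubic, $2$-edge-connected, compatible with extending $(T,C)$ to a tree-plus-cycle decomposition of $G'$, and whose interior can be faithfully contracted back edge-for-edge; the $K_4$-minus-edge gadget above is tailored to meet all four demands.
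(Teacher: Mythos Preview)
Your proof is correct and follows the same strategy as the paper: take a spanning tree of $G-E(C)$, replace each cotree edge by a local gadget so that the resulting cubic graph decomposes into a tree and a cycle, apply Conjecture~\ref{con:3}, and contract the gadgets back across their $2$-edge-cuts. The only differences are cosmetic: the paper uses a smaller gadget (subdivide the edge twice and add a parallel edge, producing a digon rather than your triangle) and, as a minor optimization, leaves any cotree edges that already lie on circuits untouched, absorbing those circuits directly into $C'$; it also omits the explicit verification of the pullback that you carry out.
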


\begin{proof}
Obviously, it suffices to show that the truth of Conjecture \ref{con:3} implies the truth of Conjecture \ref{con:1}. Suppose that $C$ is a non-separating cycle of a $2$-edge connected cubic graph $G$ such that the graph $G_C:=G-E(C)$ is not a tree. Let $T_C$ be a spanning tree of $G_C$. Then the non-trivial components of $G_C-E(T_C)$ can be paths or circuits and all are non-separating in $G_C$. Denote by $X$ the edge set $$\{e \in E(G_C-E(T_C)) \,\,\text{such that e is not contained in a circuit of} \,\, G_C-E(T_C)\}\,\,.$$ Denote by $Y_1$ the $2$-regular subgraph of $G_C-E(T_C)$ which may be empty. Now, subdivide in $G$ each of the edges of $X$ two times and add an edge joining these two new vertices to obtain a circuit of length two and call the union of theses circuits of length two $Y_2$. Thus we obtain a new cubic graph $G'$ and it is straightforward to see that $G'$ has a hist $T'$ such that the $2$-regular subgraph of $G'-E(T')$ denoted by $C'$ consists of $Y_1 \cup Y_2 \cup C$. Obviously every CDC of $G'$ containing $C'$ corresponds to a CDC of $G$ containing $C$. \qed
\end{proof}





For terminology not defined here, we refer to \cite{BM}. For more informations on cycle double covers and flows, see \cite{Z2, Z1}.

\section {Preliminary/lemmas}

The Petersen graph is denoted by $P_{10}$. If $v$ is a vertex of a graph then we denote by $E_v$ the set of edges incident with $v$. A \textit{k-CDC} of a graph $G$ is a set $\cal S$ of $k$ cycles of $G$ such that every edge of $G$ is contained in the edge sets of precisely two elements of $ \cal S$.



\begin{lemma}\label{l:1}
{\rm (Goddyn \cite{Goddyn1988} and Zhang \cite{Zhang1990}, or see \cite{Z1} Lemma 3.5.6)}
Let $G$ be a graph admitting a nowhere-zero $4$-flow and let $C$ be a cycle of $G$. Then
$G$ has a $4$-CDC $\cal S$ with $C \in \cal S$.
\end{lemma}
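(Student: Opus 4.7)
The plan is to extract a $3$-CDC from the nowhere-zero $4$-flow and then to exhibit the desired $4$-CDC in one line by a symmetric-difference manipulation.

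First I would recall the standard translation: a nowhere-zero $4$-flow on $G$ is equivalent to a nowhere-zero $\mathbb{Z}_2\times\mathbb{Z}_2$-flow, i.e.\ a pair $(f_1,f_2)$ of $\mathbb{Z}_2$-flows on $G$ such that $(f_1(e),f_2(e))\neq(0,0)$ for every edge $e$. Put $H_1:=\operatorname{supp}(f_1)$, $H_2:=\operatorname{supp}(f_2)$ and $H_3:=H_1\triangle H_2=\operatorname{supp}(f_1+f_2)$. Each $H_i$ is an even subgraph of $G$, hence a cycle in the sense of the paper, since the support of any $\mathbb{Z}_2$-flow is even and symmetric differences of even subgraphs are even. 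The nowhere-zero hypothesis says that every edge of $G$ belongs to exactly two of $H_1,H_2,H_3$, so $\{H_1,H_2,H_3\}$ is a $3$-CDC of $G$.

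Next I would propose the family
$$\mathcal{S}\;:=\;\{\,C,\;C\triangle H_1,\;C\triangle H_2,\;C\triangle H_3\,\}.$$
Each $C\triangle H_i$ is again a cycle, and a direct parity count verifies the double-cover property: for an edge $e\notin C$ one has $\chi_{C\triangle H_i}(e)=\chi_{H_i}(e)$, so the total coverage is $\sum_i\chi_{H_i}(e)=2$; for an edge $e\in C$ one has $\chi_{C\triangle H_i}(e)=1-\chi_{H_i}(e)$, so the total coverage is $1+\sum_i(1-\chi_{H_i}(e))=1+3-2=2$. Thus $\mathcal{S}$ is a $4$-CDC of $G$ containing $C$.

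The only subtlety I expect is distinctness of the four members of $\mathcal{S}$, since a $k$-CDC is defined as a set. From $H_1\triangle H_2\triangle H_3=\emptyset$ one checks that $H_i=H_j$ forces $H_k=\emptyset$ and that $C\triangle H_i=C$ forces $H_i=\emptyset$; hence the four members of $\mathcal{S}$ are distinct unless some $H_i$ is empty, i.e.\ $G$ itself is Eulerian. In that degenerate case I would instead start from a circuit decomposition of $G$ and use $C$ together with three suitably chosen unions of circuits to assemble a genuine $4$-element $4$-CDC containing $C$ by hand. This minor case split is the only fiddly point in the argument; the conceptual content of the lemma is the one-line XOR trick exhibited above.
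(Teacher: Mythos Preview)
The paper does not give its own proof of this lemma; it is quoted as a known result of Goddyn and Zhang, with a pointer to Lemma~3.5.6 in \cite{Z1}. So there is no in-paper argument to compare against. Your proof is precisely the standard one found in those references: pass from a nowhere-zero $4$-flow to a $\mathbb{Z}_2\times\mathbb{Z}_2$-flow, read off the $3$-CDC $\{H_1,H_2,H_3\}$, and take symmetric differences with $C$. The parity count you give is correct and is exactly how the result is usually established.

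On the distinctness issue you flag: you are right that, read literally as a \emph{set} of $k$ cycles, one must worry about collisions. In practice, however, the paper itself treats $k$-CDCs as lists rather than sets (note that in the proof of Theorem~\ref{t:1} the authors form $\mathcal S=(\mathcal S_1-\{J_2\})\cup\{J_1,C\}$ and call it a $5$-CDC without checking that $J_1$ and $C$ are distinct from each other or from the remaining members of $\mathcal S_1$). Under that convention your argument is complete as stated, and your degenerate-case discussion is unnecessary. If one insists on four genuinely distinct cycles, your suggested patch via a circuit decomposition in the Eulerian case works; more simply, when $G$ is Eulerian and $\emptyset\neq C\neq G$ one can take $\{C,\,G\triangle C,\,G,\,\emptyset\}$, and the boundary cases $C=\emptyset$ or $C=G$ are handled by padding any $3$-CDC with the missing empty cycle.
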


The following Lemma is well known and can be proven straightforwardly by using a popular result of Tutte, namely that a graph has a nowhere-zero $k$-flow if and only it has a nowhere-zero $\mathbb Z_k$-flow.

\begin{lemma}
\label{LE: 4-f}
Let $G$ be a graph and $C$ be a subgraph
of $G$ such that $G/E(C)$ has a nowhere-zero $k$-flow.
Then $G$ admits a $k$-flow $f$ with $supp(f) \supseteq E(G)-E(C)$.
\end{lemma}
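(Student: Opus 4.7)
The plan is to reduce to $\mathbb{Z}_k$-flows via Tutte's theorem, lift the flow from $G/E(C)$ up to $G$ by routing through each component of $C$, and then convert back to an integer flow. Concretely, I first apply Tutte's theorem to replace the hypothesis with the equivalent statement that $G/E(C)$ carries a nowhere-zero $\mathbb{Z}_k$-flow $f'$. Fix an orientation of $E(G)$ once and for all; the edges not in $E(C)$ inherit this orientation in $G/E(C)$.

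Next I extend $f'$ to a $\mathbb{Z}_k$-flow $f$ on all of $G$. For each $v \in V(G)$ let $b(v) \in \mathbb{Z}_k$ be the net $f'$-outflow at $v$ summed only over edges in $E_v \setminus E(C)$. For each component $K$ of $C$, the vertices of $V(K)$ all collapse to a single vertex $v_K$ of $G/E(C)$, and conservation of $f'$ at $v_K$ gives $\sum_{v \in V(K)} b(v) = 0$ in $\mathbb{Z}_k$. A standard fact is that on any connected graph $K$, every vertex labeling $V(K) \to \mathbb{Z}_k$ summing to zero is realized as the boundary of some $\mathbb{Z}_k$-chain on $E(K)$; such a chain is produced constructively by peeling leaves of a spanning tree of $K$ one at a time. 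Using these chains to define $f$ on $E(C)$, while keeping $f = f'$ elsewhere, yields a $\mathbb{Z}_k$-flow on $G$ whose support contains every edge of $E(G) \setminus E(C)$.

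Finally I convert this $\mathbb{Z}_k$-flow into an integer $k$-flow with the same support. Let $S = supp(f)$; then the restriction of $f$ to $S$ is a nowhere-zero $\mathbb{Z}_k$-flow on the subgraph $(V(G), S)$, so by Tutte's theorem this subgraph admits a nowhere-zero integer $k$-flow $g$. Extending $g$ by zero on $E(G) \setminus S$ gives a $k$-flow on $G$ with support $S \supseteq E(G) \setminus E(C)$, as required. The main obstacle, though standard, is the middle step: one has to verify that the correction chains built independently on the components of $C$ mesh with the pulled-back $f'$ without violating conservation at any vertex of $G$, and that the orientation bookkeeping stays consistent throughout.
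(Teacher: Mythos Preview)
Your proof is correct and is precisely the natural fleshing-out of what the paper does: the paper does not give a detailed argument but only remarks that the lemma ``can be proven straightforwardly by using a popular result of Tutte, namely that a graph has a nowhere-zero $k$-flow if and only if it has a nowhere-zero $\mathbb{Z}_k$-flow.'' Your three steps---pass to a $\mathbb{Z}_k$-flow on $G/E(C)$, lift it to $G$ by solving the boundary equations on each component of $C$, then apply Tutte again on the support to recover an integer $k$-flow---are exactly the intended route, so there is nothing to add.
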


\begin{definition}\label{d:deg}
Let $G$ and $H$ be two graphs. Then $G$ is called {\em $(k,H)$-girth-degenerate} if
$H$ can be obtained from $G$ via a series of contractions of circuits where each has length at most $k$. Moreover, we call $G$ in short $k$-girth-degenerate if $G$ is $(k,K_1)$-girth-degenerate.
\end{definition}

Note that we consider a loop as a circuit of length one. For instance every complete graph is $3$-girth-degenerate and every $2$-connected planar graph is $5$-girth-degenerate. Note also that $H$ in the above definition is a special minor of $G$.

\begin{lemma}\label{LE: Catlin} {\rm (Catlin \cite{Catlin1989}, or, see Lemma 3.8.11 of \cite{Z1}, p. 80)}\\
Let $G$ be a graph and let $C \subseteq G$ be a circuit of length at most $4$.
If $G/E(C )$ admits a nowhere-zero $4$-flow, then so does $G$.
\end{lemma}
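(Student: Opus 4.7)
The plan is to work with $\mathbb{Z}_2\times\mathbb{Z}_2$-flows, equivalent to nowhere-zero $4$-flows by Tutte's theorem. Given the nowhere-zero $\mathbb{Z}_2\times\mathbb{Z}_2$-flow $g$ on $G/E(C)$, Lemma \ref{LE: 4-f} supplies a $\mathbb{Z}_2\times\mathbb{Z}_2$-flow $f$ on $G$ with $supp(f)\supseteq E(G)\setminus E(C)$; thus $f$ is nonzero on $E(G)\setminus E(C)$ and can fail to be nonzero only on edges of $C$. Because $C$ is a connected circuit, the space of $\mathbb{Z}_2\times\mathbb{Z}_2$-flows on $G$ supported entirely in $E(C)$ is precisely $\{\alpha\mathbf{1}_C:\alpha\in\mathbb{Z}_2\times\mathbb{Z}_2\}$, a group of order $4$, so the extensions of $g$ to $G$ form an affine family of size $4$, each agreeing with $f$ on $E(G)\setminus E(C)$.

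To obtain a nowhere-zero extension we need $\alpha\notin\{f(e):e\in E(C)\}$. If $|E(C)|\le 3$, the forbidden set has at most $3$ elements in the $4$-element group $\mathbb{Z}_2\times\mathbb{Z}_2$, so a valid $\alpha$ always exists and we are done in these cases.

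The main obstacle is the case $|E(C)|=4$: the four values $f(e_1),\ldots,f(e_4)$ may exhaust all of $\mathbb{Z}_2\times\mathbb{Z}_2$, in which case every uniform shift $f+\alpha\mathbf{1}_C$ still leaves a zero somewhere on $E(C)$. To escape this configuration I would modify the given flow $g$ on $G/E(C)$ itself: pick a circuit $D$ in $G/E(C)$ passing through the contracted vertex $v_C$, and replace $g$ by $g+\beta\mathbf{1}_D$ for some $\beta\in\mathbb{Z}_2\times\mathbb{Z}_2$ with $\beta\notin\{g(e):e\in D\}$, which preserves nowhere-zeroness on $G/E(C)$. The effect is to alter the boundary sums $s_{v_i}=\sum_{e\in E_{v_i}\setminus E(C)}g(e)$ precisely at the two vertices of $V(C)$ where the two $v_C$-edges of $D$ attach in $G$; choosing $D$ and $\beta$ correctly, the new extension values on $E(C)$ no longer cover $\mathbb{Z}_2\times\mathbb{Z}_2$, and the previous shift argument applies. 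Bridgelessness of $G/E(C)$, a consequence of the existence of a nowhere-zero $4$-flow on it, provides sufficiently many circuits through $v_C$; the delicate verification is that a $D$ reaching two suitable vertices of $V(C)$ together with a compatible $\beta$ can always be found in the worst-case boundary configuration.
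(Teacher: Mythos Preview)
The paper does not give its own proof of this lemma; it is quoted from Catlin~\cite{Catlin1989} (equivalently Lemma~3.8.11 of~\cite{Z1}) and used as a black box. So there is no in-paper argument to compare against, and I evaluate your attempt on its own.

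Your reduction to $\mathbb{Z}_2\times\mathbb{Z}_2$-flows and the affine-shift argument are correct and dispose of the cases $|E(C)|\le 3$ cleanly. The genuine gap is the $4$-circuit case, exactly where you defer to a ``delicate verification''. Two independent things must be secured and neither is automatic. First, adding $\beta\mathbf{1}_D$ alters $s_i$ and $s_j$ by~$\beta$, where $v_i,v_j\in V(C)$ receive the two $v_C$-edges of $D$; in the bad configuration one has $s_1=s_3\ne 0$, $s_2=s_4\ne 0$, $s_1\ne s_2$, and breaking it requires $\{i,j\}$ to meet both $\{1,3\}$ and $\{2,4\}$, i.e.\ the two edges of $D$ at $v_C$ must land on \emph{adjacent} vertices of $C$. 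Bridgelessness of $G/E(C)$ alone does not guarantee such a circuit: if splitting $v_C$ according to the partition of its incident edges into ``lands on $\{v_1,v_3\}$'' versus ``lands on $\{v_2,v_4\}$'' disconnects the graph, then every circuit through $v_C$ uses two edges from the same class and your modification leaves $s_1+s_3$ (hence $\sum_i f(e_i)$) unchanged at~$0$. Second, even if a suitable $D$ exists, you also need a nonzero $\beta\notin\{g(e):e\in E(D)\}$; since $D$ may be long and $\mathbb{Z}_2\times\mathbb{Z}_2$ has only three nonzero elements, the values along $D$ can exhaust them. The content of Catlin's lemma lies precisely in handling this $4$-circuit obstruction, and your sketch has not yet supplied the missing idea; the published proofs proceed by a more structural argument rather than a single circuit correction.
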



\begin{lemma}\label{l:4deg}
Every $4$-girth-degenerate graph $G$ admits a nowhere-zero $4$-flow.
\end{lemma}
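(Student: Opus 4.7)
The plan is a straightforward induction on the number $n$ of contractions appearing in a series witnessing that $G$ is $4$-girth-degenerate, with the inductive step driven by Catlin's Lemma (Lemma \ref{LE: Catlin}).

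First I would set up the base case $n=0$. Here $G = K_1$ itself, so $E(G) = \emptyset$ and $G$ vacuously admits a nowhere-zero $4$-flow.

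For the inductive step, suppose the lemma holds for every graph that reduces to $K_1$ in fewer than $n$ contractions, and let $G \to G_1 \to \cdots \to G_n = K_1$ be a series of $n \geq 1$ circuit contractions, each circuit of length at most $4$. Writing $G_1 = G/E(C)$ where $C$ is the first circuit contracted, we have $|E(C)| \leq 4$, and the truncated series $G_1 \to \cdots \to K_1$ exhibits $G_1$ as $4$-girth-degenerate via $n-1$ contractions. By the inductive hypothesis $G/E(C) = G_1$ admits a nowhere-zero $4$-flow, and then Lemma \ref{LE: Catlin} immediately lifts this flow to a nowhere-zero $4$-flow on $G$.

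There is really no serious obstacle here: the proof is essentially a one-line reduction to Catlin's Lemma combined with induction. The only point worth noting is that intermediate graphs $G_i$ in the contraction sequence may contain loops or multiple edges, but this is harmless since Lemma \ref{LE: Catlin} is stated for general graphs (and loops can always be assigned an arbitrary nonzero flow value).
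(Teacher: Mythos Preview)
Your proof is correct and matches the paper's own argument exactly: induction on the number of contractions needed to reach $K_1$, with Lemma~\ref{LE: Catlin} supplying the inductive step.
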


\begin{proof}
Apply induction on the number of contractions to obtain $K_1$ (see Definition \ref{d:deg}) and apply Lemma \ref{LE: Catlin}.
 \qed
\end{proof}

\section{Main results}\label{s:2}

Every theorem in this section has been motivated by questions on cubic graphs and was also firstly stated for them. Nevertheless, cubic graphs are not mentioned in the presented theorems since the original results were generalized.

\begin{thm}\label{t:1}
Let $G$ be a $2$-edge connected graph. Suppose that $C$ is a non-separating cycle of $G$ such that $G/E(C)$ has a nowhere-zero $4$-flow. Then $G$ has a $5$-CDC $\cal S$ with $C \in \cal S$.
\end{thm}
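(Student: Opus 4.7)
The plan is to reduce to a spanning subgraph where Lemma \ref{l:1} applies, and then recombine the resulting $4$-CDC with $C$ to obtain the desired $5$-CDC. First, by Lemma \ref{LE: 4-f} the nowhere-zero $4$-flow on $G/E(C)$ lifts to a $4$-flow $f$ on $G$ with $\mathrm{supp}(f) \supseteq E(G)\setminus E(C)$. Set $Z := E(G)\setminus\mathrm{supp}(f) \subseteq E(C)$ and $G' := G-Z$; then $f\restriction_{E(G')}$ is a nowhere-zero $4$-flow on $G'$.

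Next I construct an auxiliary cycle $\hat C$ of $G'$ so that $C\triangle\hat C$ is itself a cycle of $G$. Let $U \subseteq V(C)$ be the odd-degree vertex set of the subgraph $C-Z$; then $|U|$ is even. Since $C$ is non-separating, $G-E(C)$ is connected, so a $U$-join $Y \subseteq E(G)\setminus E(C)$ exists (standardly, by pairing up the vertices of $U$ and xor-ing tree-paths in a spanning tree of $G-E(C)$). Set $\hat C := (C-Z)\cup Y$, a disjoint union because $Y\cap E(C)=\emptyset$. Then $\partial\hat C = \partial(C-Z) + \partial Y = U+U = 0$, so $\hat C$ is a cycle of $G'$. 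Apply Lemma \ref{l:1} to $G'$ and $\hat C$ to obtain a $4$-CDC $\{\hat C,\, D_1,\, D_2,\, D_3\}$ of $G'$.

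I claim $\mathcal{S} := \{C,\ Z\cup Y,\ D_1,\ D_2,\ D_3\}$ is the required $5$-CDC of $G$. The set $Z\cup Y$ is a cycle because $\partial(Z\cup Y) = \partial Z + \partial Y = U+U = 0$, and it equals $C\triangle\hat C$. Coverage is verified in four cases:
(i) $e\in Z$ is covered exactly by $C$ and $Z\cup Y$;
(ii) $e\in E(C)\setminus Z$ lies in $\hat C$, so is covered once by $C$ and once by the unique $D_i$ containing $e$ in the $4$-CDC of $G'$;
(iii) $e\in Y$ also lies in $\hat C$, so is covered once by $Z\cup Y$ and once by the unique $D_i$ containing $e$;
(iv) $e\in E(G)\setminus(E(C)\cup Y)$ lies outside $\hat C$, so is covered by exactly two of $D_1,D_2,D_3$.
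In each case the coverage is $2$.

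The main obstacle is step two: producing $\hat C$ as a cycle sitting inside $G' = G-Z$, i.e., with the parity-repair edges chosen outside $E(C)$. This is precisely where the non-separating hypothesis is used, via the existence of a $U$-join in $G-E(C)$. Without connectivity of $G-E(C)$ one cannot keep $Y$ disjoint from $E(C)$, and the clean decomposition $C\triangle\hat C = Z\cup Y$ that turns a $4$-CDC of $G'$ into a $5$-CDC of $G$ containing $C$ would break down.
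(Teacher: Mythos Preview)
Your proof is correct and follows essentially the same approach as the paper: the paper's $E_0$, $J_1$, $J_2$ are exactly your $Z$, $Z\cup Y$, $\hat C$, and the final $5$-CDC $(\mathcal S_1\setminus\{J_2\})\cup\{J_1,C\}$ coincides with your $\{C,\,Z\cup Y,\,D_1,D_2,D_3\}$. The only cosmetic difference is that the paper builds the parity-repair set by taking, for each $e\in E_0$, a circuit $C_e$ through $e$ with the rest of $C_e$ in $G-E(C)$ and then xoring these circuits, whereas you phrase the same construction as a $U$-join in a spanning tree of $G-E(C)$; both produce a subset of $E(G)\setminus E(C)$ with boundary $U$, so the arguments are interchangeable.
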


\begin{proof}
Since $G/E(C)$ has a nowhere-zero $4$-flow, $G$ has by Lemma \ref{LE: 4-f} a $4$-flow $f$ such that $supp(f) \supseteq E(G)-E(C)$. Set $E_0=\{ e : ~ f(e)=0\}$. Obviously, $E_0 \subseteq E(C)$. Since $G-E(C)$ is connected, there is a circuit $C_e$ of $G-(E(C)-\{ e \})$ containing $e$.
Set $J_1 = \bigtriangleup_{e \in E_0} C_e$. Then $J_1$ contains every edge of $E_0$ but no edge of $C-E_0$.
Moreover, set $J_2 = C \bigtriangleup J_1$. Then $J_2$ is a cycle contained in $supp(f)$ which contains all edges of $C-E_0$. Since $G-E_0$ has a nowhere-zero $4$-flow, there is by Lemma~\ref{l:1} a $4$-CDC ${\cal S}_1$ of $G-E_0$ with $J_2 \in {\cal S}_1$. Then the set ${\cal S} = ({\cal S}_1 - \{ J_2 \})\cup \{ J_1, C \}$ is a $5$-CDC of $G$ with $G\in {\cal S}$. \qed
\end{proof}

Note that Theorem \ref{t:1} and Theorem \ref{t:equiv} below are equivalent statements.
Theorem \ref{t:1} follows from Theorem \ref{t:equiv} since $E_0$ (see the proof of Theorem \ref{t:1}) defines $M$ and thus Theorem \ref{t:equiv} can be applied. The converse direction is shown in the proof of Theorem \ref{t:equiv}. Note also that $E_0$, respectively, $M$ is a matching if $G$ is cubic in Theorem \ref{t:1}, respectively, Theorem \ref{t:equiv}.

\begin{thm}\label{t:equiv}
Let $G$ be a $2$-edge connected graph which contains a non-separating cycle $C$. Suppose that $G$ has an edge subset $M \subseteq E(C)$ satisfying $G-M$ has a nowhere-zero $4$-flow, then $G$ has a $5$-CDC $\cal S$ with $C \in \cal S$.
\end{thm}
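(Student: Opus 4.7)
The plan is to derive Theorem \ref{t:equiv} from Theorem \ref{t:1}. This both completes the equivalence announced in the remark preceding the statement and delivers the desired $5$-CDC in a single step.

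First, given a nowhere-zero $4$-flow $f_0$ on $G-M$, I would extend it to a $4$-flow $f$ on $G$ by setting $f(e)=0$ for each $e\in M$. Conservation at every vertex is preserved since adding zero on new incident edges changes no vertex sum, and since $M\subseteq E(C)$ the support of $f$ contains $E(G)\setminus E(C)$.

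Next I would verify that $G/E(C)$ admits a nowhere-zero $4$-flow, the converse direction of the reasoning in Lemma \ref{LE: 4-f}. The edges of $G/E(C)$ are exactly those of $E(G)\setminus E(C)$, on which $f$ is already nowhere-zero. Each vertex of $G/E(C)$ corresponds to a connected component $V'$ of the subgraph $(V(G),E(C))$. Summing the conservation equations at the vertices of $V'$ in $G$ gives zero; since $V'$ is a full component, every $E(C)$-edge incident to $V'$ has both of its endpoints in $V'$ and therefore contributes with opposite signs at its two endpoints, cancelling in the sum. What remains is precisely conservation at $V'$ in $G/E(C)$, so $f$ descends to a nowhere-zero $4$-flow there.

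Theorem \ref{t:1} then produces a $5$-CDC $\mathcal{S}$ of $G$ with $C\in\mathcal{S}$, which is the required conclusion. The one step requiring care is the flow descent to the contraction, but it is routine once one observes the decisive fact that no $E(C)$-edge crosses between distinct components of $(V(G),E(C))$; all other steps are formal manipulations.
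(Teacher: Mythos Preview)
Your proof is correct and follows essentially the same route as the paper: both deduce from the nowhere-zero $4$-flow on $G-M$ that $G/E(C)$ admits a nowhere-zero $4$-flow and then invoke Theorem \ref{t:1}. You supply the explicit justification for the flow descent to $G/E(C)$ that the paper leaves implicit, but the argument is otherwise identical.
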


\begin{proof}
Since $G-M$ has a nowhere-zero $4$-flow, $G/E(C)$ has a nowhere zero $4$-flow. By applying Theorem \ref{t:1}, the result follows.
\qed
\end{proof}

Note that we can not prove directly Theorem \ref{t:tree} via Theorem \ref{t:1}. Consider for instance the cubic graph, $Q$ say, which results from $P_{10}$ by expanding each $u_1,u_2,u_3$ to a triangle, see Figure 1. Then $Q$ has a decomposition into a tree and a cycle $C$ with three components consisting of triangles. Moreover, $Q/E(C)$ does not have a nowhere-zero $4$-flow and thus Theorem \ref{t:1} can not be applied. Note also that $C$ is not contained in a $5$-CDC of $Q$. We proceed in our preparation for the proof of Theorem \ref{t:tree}.

\begin{prop}\label{p:1,2,3}
Let $G$ be a $2$-edge connected graph with a vertex subset $U$ such that $G-U$ is acyclic. Suppose that $|U| \leq 3$ and that $d_G(v) > 2$ for every $v \in V(G)-U$, then\\
(1) $G$ is $2$-girth-degenerate if $|U|=1$, \\
(2) $G$ is $4$-girth-degenerate if $|U|=2$, and\\
(3) $G$ is $4$-girth-degenerate or $(4,P_{10})$-girth-degenerate if $|U|=3$.
\end{prop}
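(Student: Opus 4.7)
I would prove all three parts simultaneously by induction on $|V(G)|$. The inductive step consists of locating a circuit $C$ of length at most $2$ in part (1) and at most $4$ in parts (2) and (3), contracting $C$, and verifying that the quotient $G'=G/E(C)$ still satisfies the hypotheses of the proposition after absorbing the merged vertex into $U$. This preservation is routine: $G'-U'$ remains a subforest of $G-U$, the degree bound $d_{G'}(v)>2$ survives for every $v\in V(G')-U'$, and $2$-edge connectivity is maintained by contraction. The base cases have $F:=G-U=\emptyset$: on $\le 3$ vertices, $2$-edge connectivity supplies parallel edges (a digon) or a triangle, i.e.\ a circuit of length $\le 3$.

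For part (1), any leaf $v$ of $F$ has $d_F(v)\le 1$ and $d_G(v)\ge 3$, hence at least two parallel edges joining $v$ to the unique $u\in U$, and we contract this digon. For part (2), I first extract a digon whenever some leaf of $F$ is $F$-isolated or has parallel edges to some $u_i$; in the remaining case every leaf is joined by single edges to both $u_1$ and $u_2$, and since a non-empty $F$ without an isolated vertex has at least two leaves $\ell,\ell'$, the cycle $u_1\ell u_2\ell'u_1$ is an explicit $4$-circuit.

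Part (3) is the heart of the proposition. The leaf analysis runs in parallel: either a leaf yields a digon, or every leaf $\ell$ of $F$ satisfies $|N_U(\ell)|\in\{2,3\}$, and any two leaves sharing at least two $U$-neighbours close a $4$-cycle $u_i\ell u_j\ell'u_i$. These constraints force $|N_U(\ell)|=2$ for every leaf and restrict each of the three pairs in $\binom{U}{2}$ to occur at most once, so $F$ has at most three leaves. Analogous short-circuit constraints then forbid an $F$-neighbour of a leaf to be adjacent to a vertex of $N_U(\ell)$ (triangle) and two leaves to share an $F$-neighbour (a $4$-cycle through $F$). The principal obstacle is to show that once all these restrictions are in force and no circuit of length $\le 4$ exists, $G$ must be cubic, of girth exactly $5$, on $|V(G)|=10$ vertices: a count of the edges between $F$ and $U$ yields the bound $|V(G)|\le 10$ in the cubic case, and uniqueness of the $(3,5)$-cage then identifies $G$ with the Petersen graph, delivering the $(4,P_{10})$-girth-degenerate alternative. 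In every other sub-case one produces an explicit circuit of length $\le 4$ and contracts it, closing the induction.
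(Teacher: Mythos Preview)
Your contraction-and-leaf-analysis strategy coincides with the paper's: iterate contractions of circuits of length $\le 4$ meeting $U$; parts (1), (2), and the opening of part (3) (two leaves of $F=G-U$ sharing two $U$-neighbours force a $4$-cycle, hence $|N_U(\ell)|=2$ for every leaf and at most three leaves) are the same argument. Where the paper goes further is with a second tool you do not develop: a \emph{bad set} $W\subseteq V(F)$ of pairwise $F$-distance $\le 2$ with $\sum_{w\in W}|\hat N_U(w)|\ge 4$, which again forces a circuit of length $\le 4$. This device is what rules out $F$ being a path (a case your plan does not eliminate), pins $F$ down as a subdivided claw with all three branches of length exactly $2$, and then determines the $F$--$U$ edges uniquely, yielding $P_{10}$ by direct inspection.

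Your cage endgame is not a genuine alternative. The hypothesis only gives $d_G(v)\ge 3$ for $v\notin U$ and nothing about the $u_i$ beyond $2$-edge-connectedness, so cubicness must be \emph{proved}; and proving it requires precisely the structural determination above --- after which $P_{10}$ is already in hand and the $(3,5)$-cage characterisation is redundant. Your edge count for $|V(G)|\le 10$ is explicitly stated ``in the cubic case'', so it presupposes the missing step. The plan is on the right track, but the bad-set analysis --- not the cage --- is where the real work lies.
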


\begin{proof}
Let $(G, U)$ be a pair such that $G$ is a $2$-edge connected graph and $U \subseteq V(G)$. Suppose that
$(G, U)$ satisfies condition $(\ast)$ which is defined as follows: $G-U$ is acyclic and $d_G(v) > 2$ for every $v \in V(G)-U$.
If there exists a circuit $C \subseteq G$ with $|V(C)| \leq 4$ and $V(C)\cap U \neq \emptyset $, then
we call $C$ a \emph{small circuit} of $(G, U)$ and we set $G':=G/E(C)$ and $U':=\{v_C\} \cup \{U-V(C)\}$ where
$v_C$ is the vertex in $G'$ obtained from contracting $C$. We call $(G', U')$ a \emph{small contraction} of $(G, U)$. Furthermore, we call a sequence of pairs $\{(G_i, U_i)\}_{i=1} ^n$ a \emph{small contraction sequence} if $(G_{i+1}, U_{i+1})$ is a small contraction of $(G_i, U_i)$ for each $i=1,\dots, n-1$. It is clear that if $(G_1, U_1)$ satisfies condition $(\ast)$, then every pair $(G_i, U_i)$ satisfies condition $(\ast)$ for
$i=2,\dots, n$. Note that $|U_1|\geq \cdots \geq |U_n|$ holds and that $U_i$ may equal $V(G_i)$ for some $i$.

For a given $(G,U)$, let $\{(G_i, U_i)\}_{i=1} ^n$ be a maximal small contraction sequence with $(G_1, U_1)=(G,U)$. Hence there is no small circuit
of $(G_n, U_n)$ since the sequence is maximal. In particular, there is no parallel edge with one end in $U$. Denoted by $\hat N_{U_n}(v)$ the neighbors of
$v$ of $G_n$ lying in $U_n$. We say two leaf-vertices $v_1$ and $v_2$ of $V(G_n)-U_n$ are a \emph{bad pair} if
$|\hat N _{U_n}(v_1)\cap \hat N_{U_n}(v_2)|\geq 2$. It is evident that there is no bad pair in $(G_n, U_n)$, otherwise one can easily deduce a small circuit, namely a $4$-circuit by using the bad pair and the two common neighbors of them.
Before we use all of the introduced concepts, we prove the first part of the proposition.\\
{\bf (1}) It suffices to prove that every $(G,U)$ with $U=\{ u \}$ contains a $2$-circuit intersecting $U$ since we then can proceed by induction. Obviously, every component, say $T$, of $G-U$ is a tree. Since $d_G(v) > 2$ (see condition $\ast$) for every leaf-vertex $v \in V(T)$, $v$ is adjacent via an parallel edge to $u$ and thus $G$ has the desired $2$-circuit.  \\
{\bf (2}) To prove Statement (2) we argue by contradiction. So, let $S:=\{(G_i, U_i)\}_{i=1} ^n$ be a maximal small contraction sequence with $(G_1, U_1)=(G,U)$ and suppose that $G_n \not= K_1$.
$G_n-U_n = \emptyset$ would imply that there is a small $2$-circuit which contradicts the maximality of $S$. Thus, there is a component $T$ of $G_n-U_n$. $T$ is not a single vertex otherwise there will be a pair of parallel edges incident with a vertex of $U_n$. Hence $T$ contains two leaf-vertices which form a bad pair, a contradiction. \\
{\bf (3}) Let $S:=\{(G_i, U_i)\}_{i=1} ^n$ be a maximal small contraction sequence with $(G_1, U_1)=(G,U)$ and suppose that $G_n \not= K_1$. We show that $G_n \cong P_{10}$ which will prove Statement (3). Since $|U_1|\geq \cdots \geq |U_n|$ and since Statements (1) and (2) above hold, $|U_n|=3$. Call a vertex subset $W \subseteq V(G_n)-U_n$ a \emph{bad set} if $d_{G_n-U_n}(w_1, w_2)\leq 2$ for any $w_1,w_2\in W$ and $\sum_{w_i\in W} |\hat N_{U_n}(w_i)|\geq 4$. Suppose that $G_n$ has a bad set $W$. The latter inequality implies that one vertex of $U$ has two neighbors in $W$ and the distance condition implies that $G_n$ has a small circuit, a contradiction. Hence, $G_n$ does not have a bad set.\\
Obviously, $G_n-U_n\neq \emptyset$ otherwise $G_n[U_n]$ contains a small circuit. Suppose that $G_n-U_n$ has two components $H_1$ and $H_2$. Recall that $G_n$ does not contain a small circuit. If $H_1$ consists of a single vertex $h_1$, then one can find a vertex $h_2$ in $H_2$ such that $h_1$,$h_2$ form a bad pair. If neither $H_1$ nor $H_2$ is a single vertex, then each contains two leaf-vertices and there are two leaf-vertices of $H_1$ and $H_2$ forming a bad pair by Pigeonhole principle. Hence $G_n-U_n$ is connected and thus a tree which we denote by $T$.\\
$T$ is not a single vertex otherwise there will be a small $3$-circuit. Moreover, $T$ can not have exactly two leaves since then $T$ will be a path $v_0 v_1 \cdots v_k$, and thus either $\{v_0, v_1, v_2\}$ forms a bad set if $k\geq 2$ or $\{v_0, v_1\}$ forms a bad set if $k=1$. Indeed,
$T$ can not have four or more leaves, otherwise one can choose a bad pair from these leaves by Pigeonhole principle. Therefore $T$ has exactly three leaves and thus there is a unique degree $3$-vertex, say $w_0$. Hence $T$ consists of three edge disjoint paths: $w_0 x_1\cdots x_j$, $w_0 y_1\cdots y_k$,
$w_0 z_1\cdots z_l$ with $j,k,l \geq 1$. We claim that $j=k=l=2$. If one of $\{j,k,l\}$, say $j>2$, then $\{x_j,x_{j-1},x_{j-2}\}$ forms a bad set. If one of $\{j,k,l\}$, say $k=1$, then $\{x_1, y_1, z_1\}$ will also form a bad set. Hence $j=k=l=2$. Since there is no bad pair, by symmetry, we may assume that $\hat N_{U_n}(x_2)=\{u_1,u_2\}$, $\hat N_{U_n}(y_2)=\{u_2,u_3\}$, $\hat N_{U_n}(z_2)=\{u_3,u_1\}$ where $U_n=\{u_1,u_2,u_3\}$,
see Figure 1. Since $G_n$ does not have a small circuit, we must also have $x_1 u_3, y_1u_1, z_1u_2\in E(G_n)$. Then $G_n$ is isomorphic to $P_{10}$.\qed
\end{proof}

\begin{figure}[htpb] \label{f:petersen}
\centering\epsfig{file=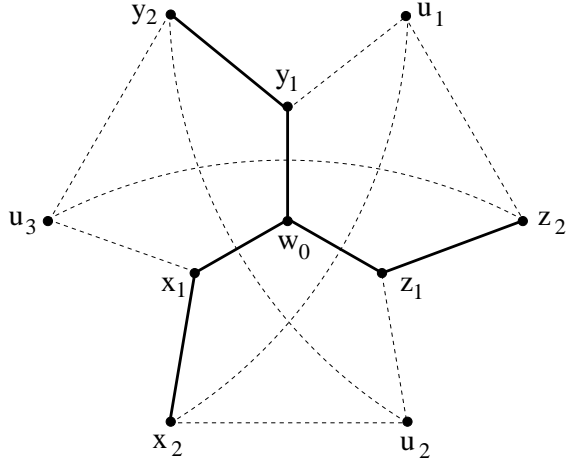,width=2.9in}
\caption{Illustration of $G_n$ and $U_n$ where $G_n \cong P_{10}$ ($T$ is shown in bold face).}
\end{figure}

\begin{lemma}
\label{l:girth-cdc}
Let $G$ be a graph and $C$ a non-separating cycle of $G$.
Let $G/E(C)$ be $(4,H)$-girth-degenerate where $H$ is a graph admitting a $k$-CDC and satisfies $\Delta(H) \leq 3$. Then\\
(1) $G$ has a $(k+1)$-CDC $\mathcal S$ with $C \in \mathcal S$ if $k \geq 5$. \\
(2) $G$ has a $5$-CDC $\mathcal S$
with $C \in \mathcal S$ if $k \leq 4$.
\end{lemma}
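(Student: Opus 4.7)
My plan is to handle the two cases separately.

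For case (2) with $k \le 4$: Since $H$ admits a $k$-CDC with $k \le 4$, one can read off a nowhere-zero $\mathbb{Z}_2 \times \mathbb{Z}_2$-flow from the CDC (or simply pad the CDC to a $4$-CDC and invoke the classical equivalence), so $H$ admits a nowhere-zero $4$-flow. Since $G/E(C)$ is $(4,H)$-girth-degenerate, iterating Catlin's Lemma (Lemma \ref{LE: Catlin}) along the contraction sequence in reverse lifts this nowhere-zero $4$-flow from $H$ back to $G/E(C)$. Theorem \ref{t:1} then produces a $5$-CDC of $G$ containing $C$, as required.

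For case (1) with $k \ge 5$: here I cannot invoke Theorem \ref{t:1} directly, so the plan is to lift the given $k$-CDC $\mathcal{T} = \{D_1, \ldots, D_k\}$ of $H$ through the sequence of short-circuit contractions back to a $k$-CDC of $G/E(C)$, and then to un-contract $C$ itself by adjoining $C$ as the $(k+1)$-st cycle. The lifting proceeds by induction on the number of contractions. At each un-contraction step $G_{i+1} = G_i / E(C')$ with $|E(C')| \le 4$, each cycle $D$ of the current CDC passing through the cluster vertex $v_{C'}$ has two possible completions into a cycle of $G_i$: the two valid subsets $S_D \subseteq E(C')$ restoring the parity of $D$ differ exactly by $E(C')$, so flipping the choice for $D$ toggles whether $D$ covers each edge of $E(C')$. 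The hypothesis $\Delta(H) \le 3$ ensures that at most three cycles of $\mathcal{T}$ traverse each cluster of an $H$-vertex, keeping the coordinated-flipping problem---choose a completion for every traversing cycle so that each edge of $E(C')$ ends up covered exactly twice in total---combinatorially tractable. At the final step, un-contracting $C$ from $G/E(C)$ to $G$, the completions inside each component of $C$ are arranged instead so that each edge of $E(C)$ is covered \emph{exactly once} by the lifts; adding $C$ itself then restores the double-cover property on $E(C)$, producing the desired $(k+1)$-CDC containing $C$.

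The principal obstacle is exactly this coordinated-flipping step: one must show that the resulting system of coverage constraints on $E(C')$ always admits a solution with multiplicity exactly two. I would handle this via a short case analysis indexed by the circuit length ($1$ through $4$) and by the attachment pattern of the at most three pendant edges to the vertices of the circuit. A subtle additional issue is that intermediate contraction graphs $G_{i+1}$ may have cluster vertices of degree exceeding three, even though $\Delta(H) \le 3$; this is to be dealt with by processing the un-contractions in an order that un-contracts ``outermost'' clusters first, so that the degree of each cluster vertex at the moment of un-contraction is inherited from its image in $H$ and therefore at most three throughout the induction.
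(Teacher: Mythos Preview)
Your argument for case~(2) is correct and in fact cleaner than the paper's: once $H$ has a nowhere-zero $4$-flow, iterating Lemma~\ref{LE: Catlin} along the degeneracy sequence gives a nowhere-zero $4$-flow on $G/E(C)$, and Theorem~\ref{t:1} finishes. Note that this route never uses the hypothesis $\Delta(H)\le 3$.

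For case~(1), however, there is a genuine gap, and the paper proceeds quite differently. The paper does not try to lift the $k$-CDC of $H$ through the contraction sequence. Instead it reads $\Delta(H)\le 3$ as a structural statement about $G$ itself: each vertex $w_0$ of $H$ is the image of a connected subgraph $W_0\subseteq G$, and since $d_H(w_0)\le 3$ this $W_0$ is separated from the rest of $G$ by an edge cut of size at most~$3$. The proof is then an induction on the number of nontrivial pieces $W_0$: contract $W_0$ to obtain $J=G/E(W_0)$ (one fewer big piece, so the induction hypothesis yields a CDC of $J$ containing $C\setminus W_0$); contract everything outside $W_0$ to obtain $J'$, observe that $J'/E(C\cap W_0)$ is $4$-girth-degenerate, and apply Theorem~\ref{t:1} to get a $5$-CDC of $J'$ containing $C\cap W_0$; finally merge the two CDCs across the $2$- or $3$-edge cut by an elementary gluing fact.

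Your lifting strategy runs into two real obstacles that this cut-based argument avoids. First, the reordering you propose to keep cluster degrees at most~$3$ is not well-defined: the short circuits in the degeneracy sequence live in successive quotients, and a circuit contracted at step $i$ need not correspond to any circuit in an earlier graph, so the un-contractions cannot in general be permuted. Second, and more seriously, the final step---un-contracting $C$ itself---is not a short-circuit step at all; $C$ can have components of arbitrary length. What you need there is the assertion that whenever $C$ is non-separating and $G/E(C)$ has a $k$-CDC, one can lift to a $(k+1)$-CDC of $G$ containing $C$. That is a genuine extension of Theorem~\ref{t:1} (which is exactly the case $k=4$ and relies on Lemma~\ref{l:1}, a tool specific to $4$-flows), and the naive mechanism you describe---choose a $T_j$-join inside each component of $C$ for each lifted cycle so that the multiplicities sum to one on every edge of $C$---does not succeed for arbitrary parity data; already on a triangle one can write down admissible families $(T_j)$ with no such system of joins. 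You give no argument ruling out such configurations.
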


\begin{proof}
Let $G_3$ be a $2$-edge connected graph with an edge-cut $E_s$ with $|E_s| = s$, $s \in \{2,3\}$ such that $G_3-E_s$ consists of two components $G_1$ and $G_2$. Define two new graphs $\hat G_1:=G_3/E(G_2)$ and $\hat G_2:= G_3/E(G_1)$.
Denote the unique vertex in $\hat G_1$ ($\hat G_2$) which has been obtained from contracting $E(G_2)$ ($E(G_1)$) by $g_2$ ($g_1$). Let $C_i \subseteq \hat G_i$, $i=1,2$ be a cycle such that $g_1 \notin V(C_2)$ and $g_2 \notin V(C_1)$. Then $C_1$, $C_2$ are cycles of $G_3$ and $C_1 \cup C_2$ is also a cycle of $G_3$. The following fact can be verified straightforwardly.

\begin{fact} \label{f:cdc-cut}
Let $\hat G_i$, $i=1,2$ have a $k_i$-CDC ${\mathcal S}_i$ with $C_i \in {\mathcal S}_i$ and suppose $k_1 \leq k_2$.
Then $G_3$ has a $k_2$-CDC ${\mathcal S}_3$ with $C_1 \cup C_2 \in {\mathcal S}_3$.
\end{fact}

Call a vertex $w_0 \in V(H)$ \textit{big} if it corresponds to a subgraph $W_0$ of $G$ with $|V(W_0)|>1$. Then
$W_0$ is connected and $E_{w_0} \subseteq E(H)$ corresponds to an $s$-edge-cut of $G$ for some $s \in \{2,3\}$ such that one component of $G-E_s$ is $W_0$.\\ 
We prove the lemma by induction on the number of big vertices of $H$ denoted by $b(H)$. If $b(H)=0$, then $G=H$ and $C=\emptyset$ and the lemma holds. Now suppose $b(H)=n+1$. Let $w_0$ be a big vertex of $H$ and let $W_0$ be its corresponding subgraph in $G$. Define the graph $J:=G/E(W_0)$ and the cycle $C_J:=C-(C \cap W_0)$. Then $C_J$ is a non-separating cycle of $J$. Moreover, $J/E(C_J)$ is $(4,H)$-girth-degenerate since $G/E(C)$ is $(4,H)$-girth-degenerate. Furthermore, $H$ has already by assumption a $k$-CDC. Thus all conditions of the considered lemma are fulfilled and since $H$ has (with respect to $J$) precisely $n$ big vertices, $J$ has a CDC ${\mathcal S}_J$ with $C_J \in {\mathcal S}_J$ satisfying Statements (1),(2) (if we replace $G$ by $J$, $S$ by $S_J$, and $C$ by $C_J$).
To obtain the desired CDC of $G$, define the graph $J':=G/E(G-V(W_0))$ (recall that $W_0$ is connected) and denote the unique vertex of $J'$ which is not part of $W_0$ by $x$. Set $C_{J'}=C-C_J$. Since $G/E(C)$ is $(4,H)$-girth-degenerate and $w_0$ a big vertex, it follows that $J'/E(C_{J'})$ is $4$-girth-degenerate. Hence $J'/E(C_{J'})$ has by Lemma \ref{LE: Catlin} a nowhere-zero $4$-flow. Since $C_{J'}$ is a non-separating cycle of $J'$, there is by Theorem \ref{t:1} a $5$-CDC ${\mathcal S}_{J'}$ of $J'$ with $C_{J'} \in {\mathcal S}_{J'}$. Depending on the value of $k$ (concerning the $k$-CDC of $H$) there are two cases.\\
Case 1. $k \geq 5$. Then ${\mathcal S}_{J}$ is a $(k+1)$-CDC of $J$. Since ${\mathcal S}_{J'}$ is a $5$-CDC of $J'$, and $k+1 > 5$, Fact \ref{f:cdc-cut} implies that $C= C_{J'} \cup C_J$ is contained in a $(k+1)$-CDC ${\mathcal S}$
of $G$ (note that $x \notin V(C_{J'})$ and that $w_0 \notin V(C_J)$).\\
Case 2. $k \leq 4$. Then ${\mathcal S}_{J}$ is a $5$-CDC of $J$ and ${\mathcal S}_{J'}$ is a $5$-CDC of $J'$.
Fact \ref{f:cdc-cut} implies that $C= C_{J'} \cup C_J$ is contained in a $5$-CDC ${\mathcal S}$
of $G$. \qed
\end{proof}

\newpage

\begin{thm}\label{t:tree}
Let $G$ be a $2$-edge connected graph with a decomposition into a tree $T$ and a cycle $C$ with $k \leq 3$ components. Then $G$ has a CDC $\cal S$ with $C \in \cal S$ and in particular the following holds. \\
(1) If $k \leq 2$, then $G$ has a $5$-CDC ${\cal S}_2$ with $C \in {\cal S}_2$. \\
(2) Let $k =3$. Then $G$ has a $5$-CDC ${\cal S}_3$ with $C \in {\cal S}_3$ if
$G$ is not contractible to the Petersen graph, otherwise $G$ has a $6$-CDC ${\cal S}_3'$ with $C \in {\cal S}_3'$.
\end{thm}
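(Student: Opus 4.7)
The plan is to apply Proposition \ref{p:1,2,3} to the graph $G/E(C)$. Since $G - E(C) = T$ is connected, $C$ is a non-separating cycle of $G$. Before contracting, I first reduce to the case where every vertex $v \notin V(C)$ has $d_G(v) \geq 3$: any such $v$ with $d_G(v) = 2$ has both of its incident edges in $T$, and suppressing $v$ yields a smaller $2$-edge connected graph that still decomposes into a spanning tree and the same cycle $C$. A $p$-CDC of the suppressed graph containing $C$ lifts (by re-subdividing the new edge) to a $p$-CDC of $G$ containing $C$, so this reduction is harmless for both (1) and (2).

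Next, set $G' := G/E(C)$ and let $U \subseteq V(G')$ be the set of $k$ vertices obtained by contracting the components of $C$, so $|U| = k \leq 3$. The graph $G' - U$ is a subgraph of the tree $T$, hence acyclic, and for every $v \in V(G') - U$ one has $d_{G'}(v) = d_T(v) = d_G(v) \geq 3 > 2$. Thus $(G', U)$ satisfies condition $(\ast)$, and Proposition \ref{p:1,2,3} yields: if $k \leq 2$, then $G'$ is $4$-girth-degenerate; if $k = 3$, then $G'$ is either $4$-girth-degenerate or $(4, P_{10})$-girth-degenerate.

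If $G'$ is $4$-girth-degenerate, then Lemma \ref{l:4deg} gives $G'$ a nowhere-zero $4$-flow, and Theorem \ref{t:1} produces a $5$-CDC of $G$ containing $C$. This settles (1) and the non-Petersen subcase of (2). In the remaining subcase $G'$ is $(4, P_{10})$-girth-degenerate, i.e.\ $G$ is contractible to $P_{10}$; since $P_{10}$ admits a $5$-CDC and has $\Delta(P_{10}) = 3$, Lemma \ref{l:girth-cdc}(1) applied with $H = P_{10}$ and $k = 5$ produces a $6$-CDC of $G$ containing $C$.

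The hardest step is the $(4, P_{10})$-girth-degenerate branch: because $P_{10}$ is a snark, $G/E(C)$ need not admit any nowhere-zero $4$-flow there, so Theorem \ref{t:1} is unavailable and the cycle count is forced from $5$ up to $6$. Lemma \ref{l:girth-cdc} is designed precisely to bridge this gap, assembling a $6$-CDC of $G$ from a $5$-CDC of $P_{10}$ together with $5$-CDCs of the ``$4$-girth-degenerate pieces'' hanging off each big vertex, glued across edge-cuts via Fact \ref{f:cdc-cut}.
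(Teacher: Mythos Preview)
Your approach matches the paper's: reduce degree-$2$ vertices, apply Proposition~\ref{p:1,2,3} to $G/E(C)$, then invoke Theorem~\ref{t:1} in the $4$-girth-degenerate branch and Lemma~\ref{l:girth-cdc} in the $(4,P_{10})$-girth-degenerate branch. The logic and the case split are identical to the paper's proof.

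There is, however, one genuine gap. You assert that ``$G-E(C)=T$ is connected, [so] $C$ is a non-separating cycle of $G$''. But $G-E(C)$ has vertex set $V(G)$, not $V(T)$; it equals $T$ only when $T$ is spanning. Since $G$ is not assumed cubic in Theorem~\ref{t:tree}, a vertex $v$ may satisfy $v\in V(C)\setminus V(T)$ (i.e.\ every edge at $v$ lies in $C$; necessarily $d_G(v)=d_C(v)$ is even). Such a $v$ becomes an isolated vertex of $G-E(C)$, so $C$ is separating, and both Theorem~\ref{t:1} and Lemma~\ref{l:girth-cdc} become inapplicable. Your suppression step only treats degree-$2$ vertices \emph{outside} $V(C)$, so it does not remove these obstructions (and even if extended to degree-$2$ vertices in $V(C)\setminus V(T)$, it would miss those with $d_C(v)\ge 4$).

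The paper handles exactly this point with an extra reduction: for each $v\in V(C)\setminus V(T)$, regard the component of $C$ through $v$ as an eulerian closed trail and repeatedly split away consecutive pairs of edges at $v$ until $T$ becomes spanning. The resulting $\hat C$ is still a cycle with $k$ components, and any $r$-CDC of $\hat G$ containing $\hat C$ pulls back to an $r$-CDC of $G$ containing $C$. After this reduction $V(C)\subseteq V(T)$, so $G-E(C)=T$ genuinely holds and your argument (which is otherwise correct) goes through.
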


\begin{proof}
It is straightforward to see that we can assume that $G$ does not have a vertex of degree two.
Moreover, we can also assume that $V(C) \subseteq V(T)$. To see this, we introduce the following definition. Let $H$ be a graph and $v \in V(H)$ with $av,bv \in E_v$. Then we say that the graph $(H-av-bv) \cup ab $ is obtained from $H$ by \textit{splitting away} the edges $av$ and $bv$. If $V(C) \not\subseteq V(T)$, we form from $G$ and $C$ a new graph $\hat G$ (without changing the tree $T$) and a new cycle $\hat C \subseteq \hat G$ (having again $k$ components). Regard each component $C_i$, $i \in \{1,...,k\}$ of $C$ as an eulerian closed trail. For every vertex $v \in V(C_i)$ in $G$ with $d_{C_i}(v) \geq 4$ satisfying $v \notin V(T)$, we split repeatedly pairs of consecutive edges (of the trail) having both $v$ as endvertex, away, until $T$ becomes a spanning tree and we denote this obtained cycle by $\hat C$. It is straightforward to verify that every $r$-CDC of $\hat G$ with contains $\hat C$ corresponds to a $r$-CDC of $G$ with contains $C$.
Hence we assume $V(C) \subseteq V(T)$. \\ 
Since $C$ has at most three components, $G'=G/E(C)$ satisfies the conditions of Proposition \ref{p:1,2,3} (replace $G$ by $G'$). We can assume that $G'$ is not $(4,P_{10})$-girth-degenerate, otherwise we apply Proposition \ref{p:1,2,3} (3) and Lemma \ref{l:girth-cdc} (since $P_{10}$ has a $5$-CDC). Thus $G'$ is at most $4$-girth-degenerate by Proposition \ref{p:1,2,3}. By Lemma \ref{l:4deg}, $G'$ admits a nowhere-zero $4$-flow. Moreover, $C$ is non-separating since $G-E(C)$ is a tree. Hence the conditions of Theorem \ref{t:1} are fulfilled and its application finishes the proof.
\qed
\end{proof}

\section{
Corollaries for cubic graphs
}\label{sep}

Within this section we show some applications of Theorem \ref{t:1} and Theorem \ref{t:tree} 
for cubic graphs. For this purpose we need the following definition and lemma.

\begin{definition}
\label{DEF: 1}
An evenly spanning cycle of a graph $G$ is a spanning cycle $C$ of $G$ such that for every component, $L$ say, of $C$, the number of vertices in $L$ with odd degree (with respect to $G$) is even.
\end{definition}

For instance, $V(G)$ is an evenly spanning cycle of $G$ if $G$ is an eulerian graph. In contrast to the latter example, an evenly spanning cycle of a $2k+1$-regular graph can not contain a trivial component. Note that every hamiltonian circuit is an evenly spanning cycle.

\begin{lemma}\label{l:2}
{(\rm \cite{Z2} or \cite{Archdeacon1984})}
 The following statements are equivalent:\\
(1) A graph $G$ has a nowhere-zero $4$-flow. (2) $G$ has an evenly spanning cycle.
\end{lemma}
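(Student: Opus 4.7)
The plan is to use Tutte's classical theorem equating nowhere-zero $4$-flows with nowhere-zero $\mathbb{Z}_2\times\mathbb{Z}_2$-flows, and then to translate both directions via a $T$-join argument. The key translation: a nowhere-zero $\mathbb{Z}_2\times\mathbb{Z}_2$-flow on $G$ is equivalent to a pair of even subgraphs $A_1, A_2 \subseteq G$ (cycles, in the terminology of this paper) whose edge sets cover $E(G)$. Indeed, the $i$-th $\mathbb{Z}_2$-coordinate of the flow, restricted to its support, yields $A_i$; the flow condition at each vertex makes each $A_i$ an even subgraph; and ``nowhere-zero'' is precisely $A_1 \cup A_2 = E(G)$.

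For $(1)\Rightarrow(2)$, I would take $C := A_1$, regarded as a spanning subgraph of $G$ (with isolated vertices forming trivial components). For any component $L$ of $C$, no edge of $A_1$ lies in the cut $\delta_G(V(L))$ (since $L$ is a whole $A_1$-component), so $\delta_G(V(L)) \subseteq E(A_2)$. Because $A_2$ is an even subgraph it meets every cut of $G$ in an even number of edges, whence $|\delta_G(V(L))|$ is even. The handshake identity $\sum_{v \in V(L)} \deg_G(v) = 2|E_G(V(L))| + |\delta_G(V(L))|$ then forces the number of odd $G$-degree vertices in $L$ to be even, so $C$ is evenly spanning.

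For $(2)\Rightarrow(1)$, given an evenly spanning cycle $C$, let $T \subseteq V(G)$ be the set of vertices of odd $G$-degree. The evenly spanning hypothesis is exactly the condition ensuring, component by component, the existence of a $T$-join $F \subseteq E(C)$ inside the graph $C$. Set $A_1 := E(C)$ and $A_2 := (E(G) \setminus E(C)) \cup F$, so that $A_1 \cup A_2 = E(G)$ and $A_1$ is even. A short parity check $\deg_{A_2}(v) \equiv \deg_G(v) + \deg_F(v) \equiv 0 \pmod 2$ (using that $F$ is a $T$-join and $\deg_C(v)$ is even) shows that $A_2$ is also an even subgraph, so $(A_1, A_2)$ yields the desired nowhere-zero $\mathbb{Z}_2\times\mathbb{Z}_2$-flow and hence a nowhere-zero $4$-flow via Tutte. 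The only subtle point I anticipate is the treatment of trivial components of $C$ in the forward direction, but an isolated vertex $v$ of $A_1$ has every incident edge in $A_2$, forcing $\deg_G(v)$ to be even and thus $v \notin T$; this is already captured by the cut-parity argument applied to $L = \{v\}$, so no separate case analysis is required.
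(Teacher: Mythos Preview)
Your argument is correct. The paper does not supply its own proof of this lemma; it is quoted as a known result with references to \cite{Z2} and \cite{Archdeacon1984}, so there is no in-paper proof to compare against. Your route via Tutte's $\mathbb{Z}_2\times\mathbb{Z}_2$-flow characterization, the cut-parity argument for $(1)\Rightarrow(2)$, and the $T$-join construction inside $C$ for $(2)\Rightarrow(1)$ is the standard one and goes through cleanly; in particular your handling of trivial components of $C$ (forcing $\deg_G(v)$ even for isolated vertices of $A_1$) is exactly right.
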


\begin{cor}
\label{c:ham}
Let $G$ be a $2$-edge connected cubic graph. Suppose that $C$ is a non-separating cycle of $G$ such that $G/E(C)$ has a hamiltonian circuit. Then $G$ has a $5$-CDC $\cal S$ with $C \in \cal S$.
\end{cor}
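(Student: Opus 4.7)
The plan is to reduce Corollary \ref{c:ham} immediately to Theorem \ref{t:1} by verifying that $G/E(C)$ has a nowhere-zero $4$-flow, using the bridge to spanning cycles provided by Lemma \ref{l:2}. The other hypotheses of Theorem \ref{t:1} are already built in: $G$ is $2$-edge connected by assumption, and $C$ is a non-separating cycle of $G$ by assumption, so the only missing ingredient is the $4$-flow on the contracted graph $G'=G/E(C)$.

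First, I would let $H$ denote a hamiltonian circuit of $G'$. Viewed as a subgraph of $G'$, $H$ is a spanning $2$-regular connected subgraph, i.e.\ a spanning cycle of $G'$ whose unique component is $H$ itself. To check the evenly spanning property (Definition \ref{DEF: 1}), I need to confirm that the number of vertices of $H$ having odd degree in $G'$ is even. Since $V(H)=V(G')$, this count coincides with the total number of odd-degree vertices of $G'$, which is even by the handshake lemma applied to $G'$. Hence $H$ is an evenly spanning cycle of $G'$, and Lemma \ref{l:2} yields a nowhere-zero $4$-flow on $G'$.

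With this $4$-flow in hand, all hypotheses of Theorem \ref{t:1} are satisfied, so $G$ has a $5$-CDC $\mathcal{S}$ with $C\in\mathcal{S}$. I do not foresee any real obstacle: the argument is essentially a bookkeeping chain (hamiltonian circuit $\Rightarrow$ evenly spanning cycle $\Rightarrow$ nowhere-zero $4$-flow $\Rightarrow$ Theorem \ref{t:1}), and the cubic assumption is not even used in the derivation beyond what is already absorbed into the framework. The only tiny subtlety worth flagging is that the parity condition in Definition \ref{DEF: 1} reduces to the handshake lemma precisely because a hamiltonian circuit has a single component; if the spanning cycle had several components, the parity would need to be verified on each component separately.
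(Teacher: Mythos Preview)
Your proposal is correct and follows exactly the paper's own approach: a hamiltonian circuit of $G'=G/E(C)$ is an evenly spanning cycle, so Lemma~\ref{l:2} yields a nowhere-zero $4$-flow on $G'$, and Theorem~\ref{t:1} finishes. Your extra remark that the parity check reduces to the handshake lemma because the spanning cycle has a single component is a useful clarification the paper leaves implicit.
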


\begin{proof}
Since a hamiltonian circuit in $G':=G/E(C)$ is an evenly spanning cycle, $G'$ has by Lemma \ref{l:2} a nowhere-zero $4$-flow. By applying Theorem \ref{t:1}, the result follows.
\qed
\end{proof}

\begin{cor}
\label{c:3}
Let $G$ be a $2$-edge connected cubic graph with a $2$-factor consisting of two chordless circuits $C_1$, $C_2$.
Then $G$ has a $5$-CDC $\mathcal S$ with $C_1 \in \mathcal S$.
\end{cor}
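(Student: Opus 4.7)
The plan is to apply Corollary \ref{c:ham} with $C := C_1$. So I need to verify two things: that $C_1$ is a non-separating cycle of $G$, and that $G/E(C_1)$ contains a Hamiltonian circuit.

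First I would analyze the structure arising from chordlessness. Let $M := E(G)\setminus (E(C_1)\cup E(C_2))$. Each vertex has degree $3$ in $G$ and degree $2$ in the $2$-factor $C_1\cup C_2$, so every vertex is incident with exactly one edge of $M$; thus $M$ is a perfect matching. Because $C_1$ is chordless, the $M$-edge at any vertex $u\in V(C_1)$ cannot have its other endpoint in $V(C_1)$, so it lands in $V(C_2)$; the same argument applied to $C_2$ shows $M$ is a bijective matching between $V(C_1)$ and $V(C_2)$, and in particular $|V(C_1)|=|V(C_2)|=:n$.

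Next I would deduce the two required properties. Since $G-E(C_1)=C_2\cup M$, the circuit $C_2$ is connected and every vertex of $V(C_1)$ is attached to $V(C_2)$ by an $M$-edge, so $G-E(C_1)$ is connected; that is, $C_1$ is non-separating. For the quotient, contracting all edges of $C_1$ identifies $V(C_1)$ into a single vertex $v$; the remaining edges are $E(C_2)$ together with the $n$ matching edges, each of which now joins $v$ to a distinct vertex of $V(C_2)$ (distinctness comes from the bijectivity of $M$). Writing the cyclic order on $C_2$ as $u_1u_2\cdots u_n u_1$, the walk $v\,u_1\,u_2\cdots u_n\,v$ is a Hamiltonian circuit of $G/E(C_1)$ (a standard wheel-type Hamilton cycle).

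With both hypotheses of Corollary \ref{c:ham} verified, that corollary immediately yields a $5$-CDC $\mathcal{S}$ of $G$ with $C_1\in\mathcal{S}$.

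The only subtle point—and thus the main thing to be careful about—is the use of chordlessness to guarantee that the matching $M$ goes bijectively between $V(C_1)$ and $V(C_2)$; without this one could lose non-separation (a chord of $C_1$ would produce an $M$-edge with both endpoints in $V(C_1)$, isolating that pair from $C_2$ in $G-E(C_1)$) and one would not recover the clean wheel structure after contraction. Everything else is routine structural bookkeeping.
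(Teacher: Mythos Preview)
Your argument is correct and follows exactly the paper's approach: it applies Corollary~\ref{c:ham} after verifying that $C_1$ is non-separating and that $G/E(C_1)$ is hamiltonian. The paper's proof is a one-line assertion of these two facts, whereas you have spelled out the (routine) details via the perfect matching $M$ and the resulting wheel structure.
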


\begin{proof}
Since $C_1$ is non-separating and $G/E(C_1)$ is hamiltonian, the result follows by applying Corollary \ref{c:ham}.\qed
\end{proof}

\begin{rem}
$C_1$ in Corollary \ref{c:3} is part of some CDC even if $C_1$ is allowed to have chords, see \cite{FH}.
$G$ in Corollary \ref{c:ham} has some CDC even if $C$ is separating, see \cite{HM1}.
The above results offer some insight which cycles are part of a $5$-CDC, see the Strong $5$-CDCC in \cite{HO2}.
\end{rem}

The next result follows directly from Theorem \ref{t:tree}.


\begin{cor}\label{c:twotrees}
Let $G$ be a $2$-edge connected cubic graph with a cycle $C \subseteq G$ such that \\(i) $C$ has at
most three components and \\(ii) $G - E(C)$ is acyclic and has at most two components $\{T_1, T_2\}$.
\\Then $G$ has a CDC if $T_k \cup C$ is bridgeless for each $k \in \{1,2\}$.
\end{cor}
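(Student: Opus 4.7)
My plan is to apply Theorem \ref{t:tree} twice, once to each of the two subgraphs $T_k\cup C$, and then to glue the two resulting CDCs along $C$. If $G-E(C)$ has a single component, the conclusion is immediate from Theorem \ref{t:tree} applied to $G$ itself with spanning tree $T_1$ and cycle $C$. So assume $G-E(C)=T_1\cup T_2$ and, for $k\in\{1,2\}$, write $H_k:=T_k\cup C$ for the subgraph of $G$ with edge set $E(T_k)\cup E(C)$.

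To apply Theorem \ref{t:tree} to $H_k$ with the decomposition $T_k$ (tree) and $C$ (cycle, at most three components), first note that the hypothesis gives $H_k$ bridgeless. If $H_k$ fails to be connected it is because some component of $C$ lies entirely inside $V(T_{3-k})$; any such ``extra'' component of $H_k$ is itself a cycle of $G$ and admits a trivial CDC, so I apply Theorem \ref{t:tree} only to the component of $H_k$ containing $T_k$. The one technicality is that $T_k$ need not span $H_k$: a vertex $v\in (V(C)\cap V(T_{3-k}))\setminus V(T_k)$ may remain. Cubicity of $G$, together with $C$ being a cycle (all degrees even), forces $d_{H_k}(v)=d_C(v)=2$ with both incident edges in $C$. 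Hence suppressing all such degree-$2$ vertices (the WLOG step used in the proof of Theorem \ref{t:tree} itself) yields an equivalent $2$-edge connected graph in which $T_k$ spans and $C$ becomes a cycle with the same number of components. Theorem \ref{t:tree} now produces a CDC $\mathcal S_k$ of $H_k$ with $C\in\mathcal S_k$.

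Finally, form the multiset of cycles $\mathcal S:=(\mathcal S_1\setminus\{C\})\cup(\mathcal S_2\setminus\{C\})$ in $G$. Each edge of $T_k$ lies in exactly two members of $\mathcal S_k$, neither equal to $C$, and in no member of $\mathcal S_{3-k}$, so it is covered twice by $\mathcal S$. Each edge of $C$ lies in two members of each $\mathcal S_k$, one of which is $C$; removing the single copy of $C$ from each $\mathcal S_k$ leaves exactly one covering cycle from each side, for a total of two. Thus $\mathcal S$ is a CDC of $G$. The step I expect to require most care is verifying that Theorem \ref{t:tree} genuinely applies to $H_k$ despite $T_k$ being non-spanning in general; the degree-$2$ suppression described above is the key, and everything else is routine bookkeeping.
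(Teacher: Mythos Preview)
Your argument is correct and is precisely the way the paper intends the corollary to be derived: the paper gives no proof beyond the remark that the result ``follows directly from Theorem~\ref{t:tree}'', and applying Theorem~\ref{t:tree} to each $T_k\cup C$ and then removing the two copies of $C$ is the natural way to make this precise. Your handling of the two technicalities (possible disconnectedness of $H_k$ when a circuit of $C$ lies entirely in $V(T_{3-k})$, and $T_k$ not spanning $H_k$) is fine; note that the second one is in fact already absorbed into the statement of Theorem~\ref{t:tree}, whose proof begins by suppressing degree-$2$ vertices, so you could simply invoke the theorem on the main component of $H_k$ without further comment.
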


\begin{cor}\label{c:decomp}
Let $G$ be a $2$-edge connected cubic graph which has a decomposition into a spanning tree $T$, $k_1$ circuits and
$k_2$ edges such that $k_1 + k_2 \leq 3$. Then $G$ has a CDC containing the cycle consisting of the $k_1$ circuits.
\end{cor}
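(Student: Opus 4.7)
The plan is to deduce Corollary~\ref{c:decomp} from Theorem~\ref{t:tree} by replacing each of the $k_2$ loose edges with a length-two circuit, imitating the subdivision trick used in the proof of Proposition~\ref{p:reformulation}. Write the decomposition as $E(G)=E(T)\cup E(\tilde C)\cup\{e_1,\dots,e_{k_2}\}$, where $\tilde C$ is the union of the $k_1$ circuits, i.e.\ the cycle we want to place in a CDC of $G$.

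First I would build a new graph $G'$ by, for each $e_i=u_iv_i$, subdividing $e_i$ twice to introduce vertices $w_i^1,w_i^2$ and then adding a parallel edge between them, creating a circuit $D_i$ of length two. Then $G'$ is $2$-edge connected and cubic, the spanning tree $T':=E(T)\cup\bigcup_{i=1}^{k_2}\{u_iw_i^1,\,w_i^2v_i\}$ together with the cycle $C':=\tilde C\cup\bigcup_i D_i$ decomposes $G'$, and $C'$ has only $k_1+k_2\le 3$ components. Theorem~\ref{t:tree} then produces a CDC $\mathcal S'$ of $G'$ with $C'\in\mathcal S'$.

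The remaining step is to translate $\mathcal S'$ into a CDC of $G$ containing $\tilde C$. Letting $p_i,q_i$ denote the two parallel edges of $D_i$, each of them already has one covering inside $D_i\subseteq C'$, so each lies in exactly one further member of $\mathcal S'$; call these $A_i$ (containing $p_i$) and $B_i$ (containing $q_i$). A short parity check at the three-valent subdivision vertices $w_i^1,w_i^2$, whose incident edges are respectively $\{u_iw_i^1,p_i,q_i\}$ and $\{w_i^2v_i,p_i,q_i\}$, forces $A_i\ne B_i$ and shows that $A_i$ must traverse the path $u_iw_i^1p_iw_i^2v_i$ while $B_i$ traverses $u_iw_i^1q_iw_i^2v_i$. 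Contracting every such path back to the edge $e_i$ turns the remaining cycles of $\mathcal S'$ into cycles of $G$ and makes $C'$ become exactly $\tilde C$ (the two-circuits vanish); each $e_i$ is then picked up once by the image of $A_i$ and once by that of $B_i$, so the resulting family $\mathcal S$ is a CDC of $G$ with $\tilde C\in\mathcal S$. The only point that requires care is the parity count isolating $A_i$ and $B_i$; beyond that, no obstacle is anticipated.
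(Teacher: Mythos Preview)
Your proposal is correct and follows essentially the same approach as the paper: both subdivide each of the $k_2$ loose edges twice, add a parallel edge to create a $2$-circuit, apply Theorem~\ref{t:tree} to the resulting graph $G'$ (whose co-tree cycle now has $k_1+k_2\le 3$ components), and pull the CDC back to $G$. The paper dispenses with the back-translation in one sentence (``the CDC of $G'$ corresponds to a CDC of $G$''), whereas you spell out the parity argument at the subdivision vertices that makes this correspondence work; your added detail is sound and in fact clarifies a step the paper leaves implicit.
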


\begin{proof}
Since the CDCC is known to hold for graphs with small order, we can assume that $k_1 \not=0$.
Subdivide each of the $k_2$ edges two times and add an edge joining these two vertices to obtain a circuit of length two. Then we obtain a new graph $G'$ with a hist $T'$ for which we can apply Theorem \ref{t:tree} since $G'-E(T')$ has $k_1 + k_2 \leq 3$ circuits. Moreover, the CDC of $G'$ corresponds to a CDC of $G$ which contains all $k_1$ circuits of $G-E(T)$. \qed
\end{proof}

\begin{cor}\label{c:cy4}
Every cyclically $4$-edge connected cubic graph which has a decomposition into a tree and a cycle $C$ consisting of $k$ circuits with $k \leq 3$ has a $5$-CDC $\mathcal S$ with $C \in \mathcal S$.
\end{cor}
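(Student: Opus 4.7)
The plan is to apply Theorem~\ref{t:tree} and show that its Petersen exception cannot arise when $G$ is cyclically $4$-edge connected, so that a $5$-CDC (rather than merely a $6$-CDC) is obtained in every case. For $k\leq 2$, Theorem~\ref{t:tree}(1) directly supplies a $5$-CDC $\mathcal S$ with $C\in \mathcal S$, so it suffices to handle $k=3$. Inspecting the proof of Theorem~\ref{t:tree}(2), the only situation in which that argument yields merely a $6$-CDC is when $G':=G/E(C)$ is $(4,P_{10})$-girth-degenerate; otherwise Proposition~\ref{p:1,2,3}(3) makes $G'$ $4$-girth-degenerate, Lemma~\ref{l:4deg} furnishes a nowhere-zero $4$-flow on $G'$, and Theorem~\ref{t:1} produces the desired $5$-CDC containing $C$.

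The heart of the argument is therefore to show that $G'$ being $(4,P_{10})$-girth-degenerate is incompatible with cyclic $4$-edge connectivity of $G$. Suppose otherwise and fix a maximal small contraction sequence $\{(G_i,U_i)\}_{i=1}^n$ starting from $(G',U_1)$ and ending with $G_n\cong P_{10}$, $U_n=\{u_1,u_2,u_3\}$, as constructed in the proof of Proposition~\ref{p:1,2,3}(3). This sequence induces a canonical partition of $V(G)$ into ten connected ``blobs'', one for each vertex of $G_n$. Since $|U_n|=|U_1|=3$ and $|U_i|$ is monotonically non-increasing, every contraction in the sequence must satisfy $|U_i\cap V(C_{\mathrm{small}})|=1$ (any larger intersection would strictly decrease $|U|$). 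Consequently the three blobs $B_1,B_2,B_3$ corresponding to $U_n$ each contain exactly one of the components $C_i$ of $C$, while the remaining seven blobs are singletons from $V(G)\setminus V(C)$.

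Because $P_{10}$ is simple and cubic, each $B_i$ ($i\in\{1,2,3\}$) is joined to $V(G)\setminus B_i$ by exactly three edges of $G$, constituting a $3$-edge cut. Inside $B_i$ lies the circuit $C_i$, while outside $B_i$ lie the circuits $C_j$ for $j\neq i$, so this $3$-edge cut is cyclic, contradicting the cyclic $4$-edge connectivity of $G$. Hence $G'$ cannot be $(4,P_{10})$-girth-degenerate, and Theorem~\ref{t:tree}(2) then yields the desired $5$-CDC $\mathcal S$ with $C\in \mathcal S$. The main technical delicacy is the blob bookkeeping -- above all, verifying that no two edges of $G$ join the same pair of blobs, since such a pair would produce a $2$-circuit at some stage $G_i$ that is either contractible (contradicting maximality once the non-$U$ blob is absorbed) or joins two $U$-vertices (which would drag $|U|$ below $3$, contradicting $|U_n|=3$). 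Once this is settled, the correspondence between edges of $P_{10}$ and inter-blob edges of $G$ is bijective, and the $3$-edge cut argument goes through cleanly.
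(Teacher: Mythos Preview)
Your argument is correct and rests on the same idea as the paper's: the Petersen exception in Theorem~\ref{t:tree}(2) forces a cyclic $3$-edge cut, contradicting cyclic $4$-edge connectivity. The paper's proof is more concise because it works at the level of the \emph{statement} of Theorem~\ref{t:tree}: it invokes the general fact that any cubic graph contractible to $P_{10}$ is either $P_{10}$ itself or has a cyclic $3$-edge cut, and then disposes of $G=P_{10}$ by noting that every tree--cycle decomposition of $P_{10}$ has $k=1$. You instead open up the proofs of Proposition~\ref{p:1,2,3}(3) and Theorem~\ref{t:tree}, follow the small contraction sequence, and exhibit the $3$-edge cut as the boundary of a $U$-blob containing a single component $C_i$; this makes cyclicity of the cut immediate (the other $C_j$ lie outside) and renders a separate treatment of $G=P_{10}$ unnecessary. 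One simplification: your final paragraph on multi-edges between blobs is not needed, since any edge of $G'$ joining two distinct blobs is never part of a contracted circuit and therefore survives to $G_n\cong P_{10}$; simplicity and $3$-regularity of $P_{10}$ then give exactly three edges leaving each $U$-blob without further argument.
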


\begin{proof}
Every cubic graph which is contractible to $P_{10}$ is either $P_{10}$ itself or a cubic graph with a cyclic $3$-edge cut. Since for every decomposition of $P_{10}$ into a tree and a $2$-regular subgraph, the $2$-regular subgraph consists of one circuit, the proof follows by applying Theorem \ref{t:tree}. \qed
\end{proof}

\section{Remarks and open problems}\label{open}


We know that Conjecture \ref{con:1} is not implied by Theorem \ref{t:1} (recall the graph $Q$ defined below the proof of Theorem \ref{t:equiv}). Is this still the case if we restrict Conjecture \ref{con:1} to snarks? The graph $Q^*$ illustrated in Figure 2 is a snark which has a non-separating cycle $C^*$ (which is contained in a CDC) but Theorem \ref{t:1} is not applicable since $Q^*/E(C^*)$ does not have a nowhere-zero $4$-flow. $Q^*$ is constructed from the graph $P'$ which admits no nowhere-zero $4$-flow, by contracting double edges and expanding vertices of degree five to $5$-circuits, see Fig. 12.1 on page 303 in \cite{K}. Observe also that $C^*$ is a maximal non-separating cycle of $Q^*$, i.e $Q^*$ does not have a larger non-separating cycle $\hat C$ satisfying $C^* \subset \hat C$.\\
With respect to Conjecture \ref{con:3}, we do not know a cyclically $4$-edge connected cubic graph which prevents the direct application of Theorem \ref{t:1}.

\begin{problem}
Does there exist a snark $G$ which has a decomposition into a tree and a cycle $C$ such that $G/E(C)$ does not have a nowhere-zero $4$-flow?
\end{problem}

\begin{figure}[htpb] \label{f:kochol}
\centering\epsfig{file=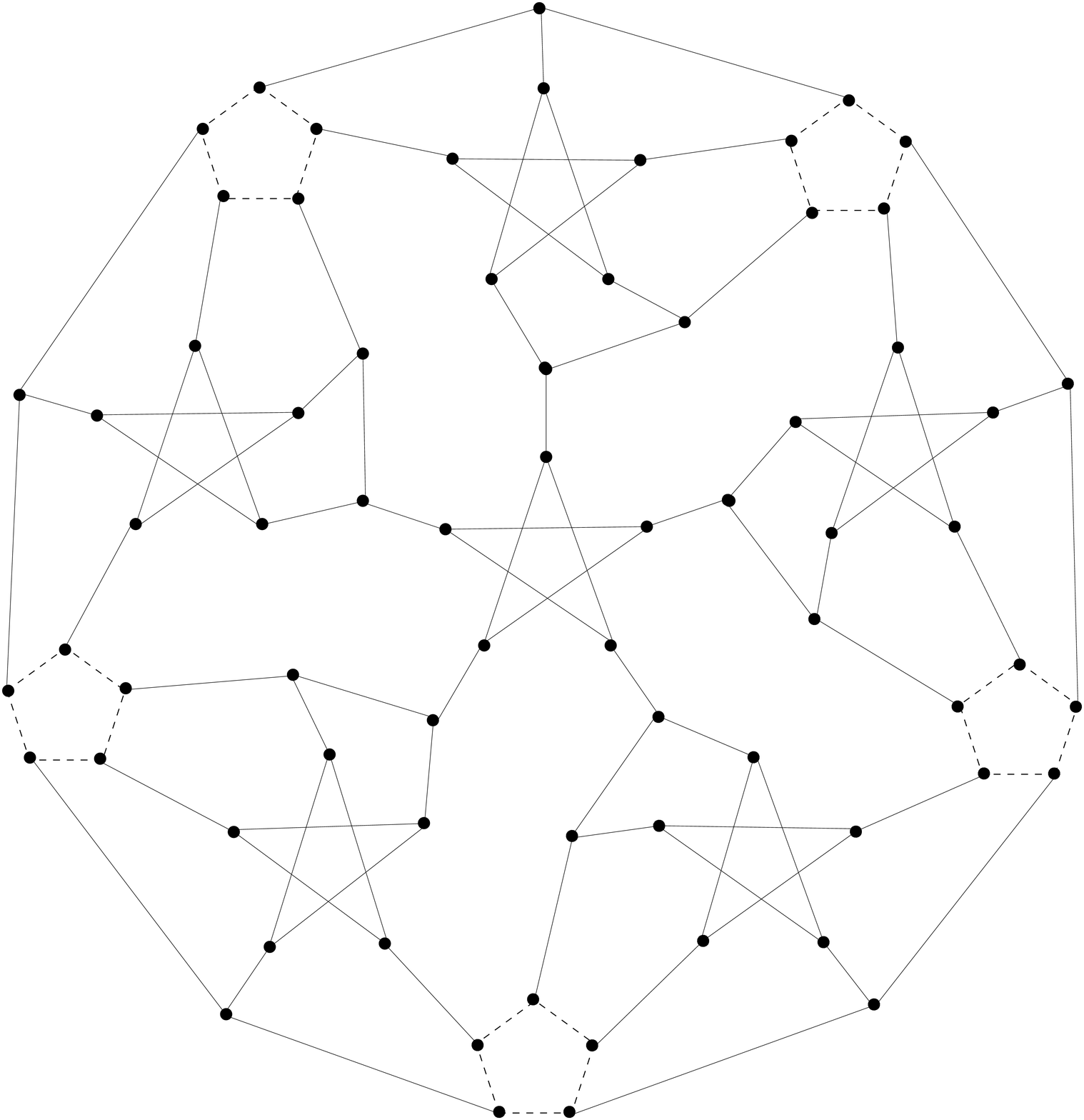,width=4.8in}
\caption{A snark $Q^*$ with a non-separating cycle $C^*$ illustrated by dashed edges.}
\end{figure}

The truth of the next conjecture implies the truth of the CDCC and in particular the truth of the $5$-CDCC, see Theorem \ref{t:1}.

\begin{con}\label{con:4}
Every cyclically $4$-edge connected cubic graph $G$ contains a non-separating cycle $C$ such that $G/E(C)$ has a nowhere-zero $4$-flow.
\end{con}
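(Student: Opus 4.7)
The plan is to reformulate the conjecture using Lemma \ref{l:2} and Lemma \ref{LE: 4-f}: it suffices to produce a non-separating cycle $C$ of $G$ together with a $4$-flow $f$ of $G$ with $\mathrm{supp}(f) \supseteq E(G)\setminus E(C)$, equivalently, an evenly spanning cycle in the quotient $G/E(C)$. Working from this formulation, my strategy splits into two tasks: (A) build a candidate non-separating cycle $C$, and (B) certify the flow on $G/E(C)$ by reducing to a $4$-girth-degenerate or $(4,P_{10})$-girth-degenerate situation so that Lemma \ref{LE: Catlin}, Lemma \ref{l:4deg}, and Lemma \ref{l:girth-cdc} are applicable.

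For task (A) I would first attempt a hist-based construction. Choose a spanning tree $T$ of $G$ with the fewest vertices of degree two; if $T$ is a hist then $C := G - E(T)$ is automatically non-separating, and if $C$ has at most three components Theorem \ref{t:tree} finishes the case directly. If $T$ has degree-two vertices or $C$ has many components, I would apply the splitting-away and subdivision trick used in the proof of Proposition \ref{p:reformulation} to reduce to the hist setting at the cost of introducing $2$-circuits; such $2$-circuits are harmless for flow arguments because each contraction of a $2$- or $3$-circuit is covered by Lemma \ref{LE: Catlin}. For task (B), with $G/E(C)$ in hand, iteratively contract circuits of length at most four until either $K_1$ is reached, giving a nowhere-zero $4$-flow via Lemma \ref{l:4deg}, or $P_{10}$ is reached. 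In the latter case one would need to enlarge or modify $C$ to destroy the Petersen obstruction, for instance by absorbing into $C$ a carefully chosen small circuit meeting the bag that contracts to $P_{10}$, then re-verifying non-separation using the cyclic $4$-edge connectivity of $G$.

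The main obstacle is that Conjecture \ref{con:4} is flagged in the paper as implying the CDC Conjecture itself, so any complete proof must circumvent the very snark obstruction that blocks the CDCC. Concretely, the snark $Q^*$ in Figure 2 already shows that the natural non-separating cycle can fail to yield a nowhere-zero $4$-flow in the quotient, so the hist-based construction of task (A) does not always succeed and a genuinely new structural theorem producing a non-separating cycle tailored to the flow condition is needed. A realistic scope for the approach outlined above is therefore to verify the conjecture for cyclically $4$-edge connected cubic graphs admitting a hist whose cotree cycle has at most three components, which is a consequence of Theorem \ref{t:tree}, and to extend it to graphs whose only reduction obstruction is a $(4,P_{10})$-girth-degenerate quotient, where Lemma \ref{l:girth-cdc} at least produces a $5$- or $6$-CDC containing $C$, even if it does not confirm the $4$-flow statement in full. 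Pushing past this, for arbitrary snarks, would require a fundamentally new matching or edge-coloring argument specific to snarks that I do not see at present.
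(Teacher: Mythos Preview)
The statement you are attempting to prove is Conjecture~\ref{con:4}, and the paper does \emph{not} contain a proof of it; it is presented explicitly as an open problem, with the remark that its truth would imply the Cycle Double Cover Conjecture. So there is no ``paper's own proof'' to compare your attempt against.

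Your proposal is not a proof either, and to your credit you essentially say so: after outlining a hist-based strategy for producing a non-separating cycle and a contraction strategy for certifying the $4$-flow, you correctly identify the fundamental obstruction, namely that a full proof would resolve the CDCC, and you point to the snark $Q^*$ as evidence that the natural candidate cycle can fail. What you have written is therefore a reasonable discussion of partial approaches and their limits, not a proof attempt with a localized gap. The concrete steps you do carry out (the hist case with at most three cotree components, handled via Theorem~\ref{t:tree}) are already consequences of results proved in the paper and do not advance toward the general conjecture. The honest conclusion is the one you reach yourself: no argument along these lines is currently known, and the conjecture remains open.
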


Note that Conjecture \ref{con:4} would be false if $G$ is not demanded to be cyclically $4$-edge connected. For instance, the cyclically $3$-edge connected cubic graph which is obtained from $K_4$ by replacing every vertex of $K_4$ with a copy of $P_{10}-v, \,v\in V(P_{10})$ would then form a counterexample.



\section*{Acknowledgments}
A.~Hoffmann-Ostenhof was supported by the Austrian Science Fund (FWF) project P 26686.
C.-Q.~Zhang was supported by National Security Agency, No.~H98230-16-1-0004, and National Science Foundation, No.~DMS-1700218.



\begin{thebibliography}{1}

\bibitem{Archdeacon1984}
D.Archdeacon, Face coloring of embedded graphs.
J. Graph Theory 8 (1984) 387-398.


\bibitem {BM} J.A.Bondy, U.S.R. Murty,
\newblock Graph Theory, Springer (2008).

 	
\bibitem {ABHT} M.O.Albertson, D.M.Berman, J.P.Hutchinson, C.Thomassen, Graphs with homeomorphically irreducible spanning trees.
\newblock Journal of Graph Theory 14 (2) (1990) 247-258.


\bibitem {BGHM} G. Brinkmann, J. Goedgebeur, J. H\"agglund, K. Markstr\"om, Generation and properties of snarks.
\newblock Journal of Combinatorial Theory, Series B 103 (2013) 468-488.

\bibitem{Catlin1989} P.A.Catlin, Double cycle covers and the Petersen graph.\\ Journal of Graph Theory 13 (1989) 465-483.

\bibitem{Fleischner1984}
H.Fleischner, 1984. Cycle decompositions, 2-coverings, removable cycles
and the four-color disease. Pages 233-246 of: Bondy, J. A., and Murty,
U. S. R. (eds), Progress in Graph Theory. New York: Academic Press.

\bibitem{Fleischner1986}
H.Fleischner, Proof of the strong 2-cover conjecture for planar
graphs. \\Journal of Combinatorial Theory, Series B 40 (2) (1986) 229-230.


\bibitem {FH} H.Fleischner, R.H\"aggkvist,\\ Cycle double covers in cubic graphs having special structures. \\
Journal of Graph Theory 77 (2) (2014) 158-170.


\bibitem{Goddyn1988} L.A.Goddyn, Cycle covers of graphs, Ph.D. Thesis, University of Waterloo, 1988.

\bibitem {HM1} R.H\"aggkvist, K.Markstr\"om, Cycle double covers and spanning minors I.\\
Journal of Combinatorial Theory, Series B 96 (2) (2006) 183-206.

\bibitem {HH} J.H\"agglund, A.Hoffmann-Ostenhof, Construction of permutation snarks.\\
Journal of Combinatorial Theory, Series B 122 (2017) 55-67.

\bibitem {HO1} A.Hoffmann-Ostenhof, T.Jatschka,
\newblock Snarks with special spanning trees.\\ arXiv:1706.05595 (2017).

\bibitem {HO1a} A.Hoffmann-Ostenhof, T.Jatschka,
\newblock Special Hist-Snarks. arXiv:1710.05663 (2017).


\bibitem {HO2} A.Hoffmann-Ostenhof, A Note on 5-Cycle Double Covers.\\
\newblock Graphs and Combinatorics 29 (4) (2013), 977-979.








\bibitem{Itai1978}
A.Itai, M.Rodeh, 1978. Covering a graph by circuits. Pages
289-299 of: Automata, Languages and Programming. Lecture Notes in
Computer Science, vol. 62. Berlin: Springer-Verlag.


\bibitem {K} M.Kochol, Superposition and Constructions of Graphs Without Nowhere-zero k-flows.\\
European Journal of Combinatorics 23 (2002) 281-306.

\bibitem {Open} 
www.openproblemgarden.org/op/cycle\_double\_covers\_containing\_pre \\ defined\_2\_regular\_subgraphs, 2017.

\bibitem{Seymour1979}
P.D.Seymour, 1979. Sums of circuits. Pages 342-355 of: Bondy, J.A.,
and Murty, U.S.R. (eds), Graph Theory and Related Topics. New York:
Academic Press.

\bibitem{Szekeres1973}
G.Szekeres, Polyhedral decompositions of cubic graphs.\\ Bull.
Austral. Math. Soc. 8 (1973) 367-387.



\bibitem{Tutte1987}
W.T.Tutte, personal correspondence with H. Fleischner (July
22, 1987).








\bibitem{Zhang1990} C.-Q. Zhang, Minimum cycle coverings and integer flows. \\Journal of Graph Theory, 14 (1990) 537-546.


\bibitem {Z1} C.-Q.Zhang, Integer flows and cycle covers of graphs. CRC Press (1997).

\bibitem {Z2} C.-Q.Zhang, Circuit double cover of graphs. Cambridge University Press (2012).









\end{thebibliography}
\end{document}